\newcommand{\R}{\mathbb{R}}
\newcommand{\C}{\mathbb{C}}
\DeclareMathOperator{\sech}{sech}
\numberwithin{equation}{section}
\let \eps \varepsilon
\newtheorem{theorem}{Theorem}
\newtheorem{remark}[theorem]{Remark}
\newtheorem{lemma}[theorem]{Lemma}
\begin{document}
\title{A Degenerate Edge Bifurcation in the 1D Linearized Nonlinear Schr\"{o}dinger Equation}

\author{Matt Coles and  Stephen Gustafson\\ \emph{\small{Department of Mathematics, University of British Columbia}} \\ \emph{\small{1984 Mathematics Road, Vancouver, British Columbia, Canada V6T 1Z2}}}

\maketitle

\begin{abstract}
This work deals with the focusing Nonlinear Schr\"{o}dinger Equation in one dimension with pure-power nonlinearity near cubic. We consider the spectrum of the linearized operator about the soliton solution. 
When the nonlinearity is exactly cubic, the linearized operator has resonances at the
edges of the essential spectrum. 
We establish the degenerate bifurcation of these resonances to eigenvalues as the
nonlinearity deviates from cubic.
The leading-order expression for these eigenvalues 
is consistent with previous numerical computations. 
\end{abstract}

\let\thefootnote\relax\footnote{
\emph{AMS Subject Classifications}: 35P15, 35Q55.

\emph{Key Words}: nonlinear Schr\"odinger equation, linearized operator, edge bifurcation, Birman-Schwinger formulation, resolvent expansion, Lyapunov-Schmidt reduction}


\section{Introduction} \label{intro}
The focusing, pure-power, Nonlinear Schr\"{o}dinger Equation 
for $\psi(x,t) \in \C$, $x \in \R^n$, $t \in \R$,
\begin{align*}
i \partial_t \psi = -\Delta \psi - |\psi|^{p-1} \psi
\qquad \qquad (\mbox{NLS}_p) 
\end{align*}
finds applications in quantum mechanics, optics, and other areas, and has seen intensive
mathematical study in recent years (eg. \cite{Sulem, Fibich}). 
(NLS$_p$) famously exhibits {\it solitary waves} (sometimes called {\it solitons}), solutions which maintain a fixed spatial profile, and which are observed to play a key role in the dynamics of general solutions. 
One naturally asks about the {\it stability} of these waves,
which leads immediately to an investigation of the spectrum of the {\it linearized operator} governing the dynamics close to the solitary wave solution. Systematic spectral analysis of the linearized operator 
has a long history (eg. \cite{Wein1, Grillakis}, and for more recent studies 
\cite{Dpel3, Gust, Voug2, Voug3}). 

The principle motivation for the present work comes from \cite{Gust} where  {\it resonance} eigenvalues (with explicit resonance eigenfunctions) were observed to sit at the edges (or {\it thresholds}) of the spectrum for the 1D linearized NLS problem with focusing cubic nonlinearity. Numerically, it was observed that the same problem with power nonlinearity close to $p=3$ (on both sides) has a true eigenvalue close to the threshold.  
In this paper we establish analytically the observed qualitative behaviour.
Stated roughly, our main result is:
\bigskip

{\it
\noindent
for $p \approx 3$, $p \not= 3$, the linearization of the 1D \emph{(NLS$_p$)}
about its soliton has purely imaginary eigenvalues, bifurcating from 
resonances at the edges of the
essential spectrum of linearized \emph{(NLS$_3$)}, 
whose distance from the thresholds is of order $(p-3)^4$.    
}
\bigskip

\noindent
The exact statement is given as Theorem~\ref{main} in 
Section~\ref{secmain}, and includes the precise leading order 
behaviour of the eigenvalues. 


The eigenvalues obtained here, being on the imaginary axis, correspond to {\it stable} behaviour at the linear level.
A further motivation for obtaining detailed information about the spectra of linearized operators is that 
such information is a key ingredient in studying the {\it asymptotic stability} of 
solitary waves:  see \cite{Busl, Cucc1,Cucc2,Gang,Pere,Schl,Bam,Cucc3} for some results of this type.
Such results typically assume the absence of threshold 
eigenvalues or resonances. The presence of a resonance is an exceptional case which complicates the stability analysis by retarding the time-decay of perturbations. 
Nevertheless, the asymptotic stability of solitons in the 1D cubic focusing NLS was recently proved in \cite{Dpel2}. The proof relies on integrable systems technology and so is only available for the cubic equation. 
The solitons are known to be stable in the (weaker) orbital sense for all $p < 5$ (the so-called mass subcritical range) 
while for $p\geq 5$ they are unstable \cite{Gril,Wein2}, but 
the question of  asymptotic stability for $p<5$ and $p \neq 3$ 
seems to be open. The existence (and location) of eigenvalues on the imaginary axis, which is shown here,
should play a role in any attempt on this problem. 


The generic bifurcation of resonances and eigenvalues from the edge of the essential spectrum was studied by \cite{Dpel} and \cite{Voug} in three dimensions.  
Edge bifurcations have also been studied in one dimensional systems using the Evans function in \cite{Evan1} and \cite{Evan2}. 
We do not follow that route, but rather adopt the approach of \cite{Dpel, Voug}
(going back also to \cite{Jens}, and in turn to the classical work \cite{Kato}), 
using a {\it Birman-Schwinger} formulation, resolvent expansion, 
and {\it Lyapunov-Schmidt reduction}. 

Our work is distinct from \cite{Dpel, Voug} due to the unique challenges of working in 
one dimension, in particular the strong singularity of the free resolvent
at zero energy, which among other things necessitates a {\it double} Lyapunov-Schmidt reduction
procedure. 

Moreover, our work is distinct from all of \cite{Evan1,Evan2,Dpel,Voug} in that we study the particular 
(and as it turns out non-generic) resonance and perturbation corresponding to the near-cubic pure-power NLS 
problem. 
Generically, a resonance is associated with the birth or death of an eigenvalue,
and such is the picture obtained in \cite{Dpel, Voug,Evan1, Evan2}:
an eigenvalue approaches the essential spectrum, becomes a resonance on the threshold and then disappears.  
In our setting, the eigenvalue approaches the essential spectrum, sits on the threshold as a resonance, then returns as an eigenvalue. The bifurcation is degenerate in the sense that the expansion of the eigenvalue begins at higher order,
and the analysis we develop to locate this eigenvalue is thus considerably more delicate.



The paper is organized as follows. The problem is set up in Section \ref{secsetup}. In Section \ref{prelim} we collect some results that are necessary for the bifurcation analysis. 
Section \ref{secmain} is devoted to the statement and proof of the main result. 
The positivity of a certain (explicit) coefficient, which is crucial to the proof, is verified numerically; details of this computation are given in 
Section~\ref{secnum}.


\section{Mathematical Setup} \label{secsetup}

\noindent We consider (NLS$_p$) in one space dimension:
\begin{align}\label{NLS}
i \partial_t \psi = - \partial_x^2\psi -|\psi|^{p-1}\psi.
\end{align}
Here $\psi=\psi(x,t): \R \times \R \rightarrow \C$ with $1< p< \infty$.
The NLS (\ref{NLS}) admits solutions of the form 
\begin{align} \label{solitary}
\psi(x,t)=Q_p(x)e^{it}
\end{align}
where $Q_p(x) > 0$ satisfies 
\begin{align}\label{ODE}
-Q_p''-Q_p^{p}+Q_p=0.
\end{align}
In one dimension the explicit solutions 
\begin{align}\label{ground}
  Q^{p-1}_p(x) = \left(\frac{p+1}{2}\right) \sech^2{\left(\frac{p-1}{2}x\right)}
\end{align}
of (\ref{ODE}) for each $p \in (1,\infty)$
are classically known to be the unique $H^1$ solutions of (\ref{ODE}) up to spatial translation and phase rotation (see e.g. \cite{Caze}).
In what follows we study the linearized NLS problem. 
That is, linearize (\ref{NLS}) about the \emph{solitary wave} 
solutions~\eqref{solitary} by considering solutions of the form 
\begin{align*}
\psi(x,t)=\left(Q_p(x) +h(x,t)\right)e^{it}.
\end{align*}
Then $h$ solves, to leading order (i.e. neglecting terms nonlinear in $h$)
\begin{align*}
i \partial_t h = (-\partial_x^2 +1)h-Q_p^{p-1}h -(p-1)Q_p^{p-1}\text{Re}(h).
\end{align*}
We write the above as a matrix equation 
\begin{align*}
  \partial_t \vec{h} = J \hat{H} \vec{h}
\end{align*}
with
\begin{align*}
 \vec{h} := \begin{pmatrix} \text{Re} (h) \\ \text{Im} (h) \end{pmatrix} \quad \quad \quad \quad \quad \ \ 
 J^{-1} := \begin{pmatrix} 0 & -1 \\ 1 & 0 \end{pmatrix} \\
\hat{H} := \begin{pmatrix} -\partial_x^2 +1 -pQ_p^{p-1}& 0 \\ 0& -\partial_x^2 +1 -Q_p^{p-1} \end{pmatrix}.
\end{align*}
The above $J \hat{H}$ is the linearized operator as it appears in \cite{Gust}.
We now consider the system rotated 
\begin{align*}
  i \partial_t \vec{h} = iJ \hat{H} \vec{h}
\end{align*}
and find $U$ unitary so that, 
$UiJ\hat{H}U^{*}=\sigma_3 H$, where $\sigma_3 $ is one of the Pauli matrices and with $H$ self-adjoint: 
\begin{align*}
\sigma_3= \begin{pmatrix} 1&0\\0&-1 \end{pmatrix}, \quad \quad U=\frac{1}{\sqrt{2}}\begin{pmatrix} 1&i \\ 1&-i \end{pmatrix},
\end{align*}
\begin{align*}
H= \begin{pmatrix} -\partial_x^2 +1&0 \\ 0&-\partial_x^2 +1 \end{pmatrix} - \frac{1}{2} \begin{pmatrix} p+1&p-1 \\ p-1&p+1 \end{pmatrix} Q^{p-1}_p =: \tilde{H}+V^{(p)}.
\end{align*}
In this way we are consistent with the formulation of \cite{Dpel,Voug}. 
We can also arrive at this system, $i \partial_t \vec{h} = \sigma_3 H \vec{h}$, by letting 
$\vec{h} = \begin{pmatrix} h & \bar{h}  \end{pmatrix}^T$
from the start. 

Thus we are interested in the spectrum of 
\begin{align*}
  \mathcal{L}_p := \sigma_3 H
\end{align*}   
and so in what follows we consider the eigenvalue problem 
\begin{align}\label{spec}
\mathcal{L}_p u = z u, \qquad z \in \C, \qquad u \in L^2(\R,\C^2) .
\end{align}
That the essential spectrum of $\mathcal{L}_p$ is 
\begin{align*}
  \sigma_{ess}(\mathcal{L}_p)=(-\infty,-1] \cup  [1,\infty)
\end{align*} 
and $0$ is an eigenvalue of $\mathcal{L}_p$ are standard facts \cite{Gust}.

When $p=3$ we have the following \emph{resonance} at the threshold $z=1$ 
\cite{Gust}
\begin{align} \label{res}
u_0= \begin{pmatrix} 2-Q^2_3 \\ -Q^2_3 \end{pmatrix} = 2\begin{pmatrix} \tanh^2 x \\ - \sech^2 x \end{pmatrix}
\end{align}
in the sense that
\begin{align} \label{reseq}
  \mathcal{L}_3 u_0 = u_0, \qquad 
  u_0 \in L^{\infty}, \qquad 
  u_0 \notin L^q, \mbox{ for } q< \infty.
\end{align}  
Our main interest is how this resonance bifurcates when $p \neq 3$ but $|p-3|$ is small. 
As is natural we seek an eigenvalue of (\ref{spec}) in the following form 
\begin{align} \label{ev}
z=1-\alpha^2, \qquad \alpha > 0 .
\end{align}
We note that the spectrum of $\mathcal{L}_p$ 
for the soliton (\ref{ground}) may only be located on the Real or Imaginary axes \cite{Gust}, and so any eigenvalues in the neighbourhood of $z=1$ must be real. There is also a resonance at $z=-1$ which we do not mention further; symmetry of the spectrum of $\mathcal{L}_p$ ensures the two resonances  bifurcate in the same way. 

We now recast the problem in accordance with the {\it Birman-Schwinger formulation} (pp. 85 of \cite{Book}), as in~\cite{Dpel,Voug}. 
For~\eqref{ev}, \eqref{spec} becomes 
\begin{align*}
(\sigma_3 \tilde{H} -1+\alpha^2)u &= -\sigma_3 V^{(p)} u.
\end{align*}
The constant-coefficient operator on the left is now invertible so we can write
\begin{align*}
u=-(\sigma_3 \tilde{H} -1 + \alpha^2)^{-1} \sigma_3 V^{(p)} u =: -R^{(\alpha)} V^{(p)} u.
\end{align*}
Set
\begin{align*}
 w := |V_0|^{\frac{1}{2}}u, \quad V_0 := V^{(p=3)}
\end{align*}
and apply $|V_0|^{\frac{1}{2}}$ to arrive at the problem 
\begin{align}\label{B-S}
  w= -K_{\alpha,p}w, \qquad
  K_{\alpha,p} := |V_0|^{\frac{1}{2}} R^{(\alpha)} V^{(p)} |V_0|^{-\frac{1}{2}}
\end{align}
with 
\begin{align} \label{resolvent}
R^{(\alpha)}= \begin{pmatrix} (-\partial_x^2 + \alpha^2)^{-1} & 0 \\ 0 & (-\partial_x^2+2 -\alpha^2)^{-1} \end{pmatrix}.
\end{align}
We now seek solutions $(\alpha,w)$ of (\ref{B-S}) which correspond to eigenvalues $1-\alpha^2$ and eigenfunctions $|V_0|^{-\frac{1}{2}} w$ of (\ref{spec}). 
The decay of the potential $V^{(p)}$ and hence $|V_0|^\frac{1}{2}$ now allows us to work in the space $L^2=L^2(\R,\C^2)$, whose standard inner product we denote by $\langle\cdot,\cdot\rangle$. 

The resolvent $R^{(\alpha)}$ has integral kernel 
\begin{align*}
R^{(\alpha)}(x,y)=\begin{pmatrix} \frac{1}{2\alpha} e^{-\alpha|x-y|} & 0 \\ 0 
& \frac{1}{2\sqrt{2-\alpha^2}} e^{-\sqrt{2-\alpha^2}|x-y|} \end{pmatrix}
\end{align*}
for $\alpha>0$.
We expand $R^{(\alpha)}$ as 
\begin{align} \label{resolvexp}
  R^{(\alpha)} = \frac{1}{\alpha} R_{-1} + R_0 + \alpha R_1 + \alpha^2R_R.
\end{align}     
These operators have the following integral kernels
\begin{align*}
R_{-1}(x,y)= \begin{pmatrix} \frac{1}{2} & 0 \\ 0 & 0 \end{pmatrix}, 
R_0(x,y) = \begin{pmatrix} -\frac{|x-y|}{2} & 0 \\ 0 &  \frac{e^{-\sqrt{2}|x-y|}}{2\sqrt{2}} \end{pmatrix}, 
R_1(x,y) = \begin{pmatrix} \frac{|x-y|^2}{4} & 0 \\ 0 & 0 \end{pmatrix} 
\end{align*}
and for $\alpha > 0$ the remainder term $R_R$ is continuous in $\alpha$ 
and uniformly bounded as an operator from a weighted $L^2$ space (with sufficiently strong polynomial weight) to its dual,
and the entries of $R_R(x,y)$ grow at most quadratically in $|x-y|$. 
We also expand the potential $V^{(p)}$ in $\eps$ where $\eps :=p-3$
\begin{align} \label{potexp}
V^{(p)}= V_0 + \eps V_1 + \eps^2 V_2 + \eps^3 V_R ,
\qquad \eps := p-3
\end{align}
and
\begin{align*}
&V_0 = -\begin{pmatrix} 2 & 1 \\ 1 & 2 \end{pmatrix} Q_3^2 
\quad& V_1= -\frac{1}{2} \begin{pmatrix} 1 & 1 \\ 1 & 1 \end{pmatrix} Q_3^2 - \begin{pmatrix} 2 & 1 \\ 1 & 2 \end{pmatrix} q_1 
\\ &V_2 = -\frac{1}{2} \begin{pmatrix} 1 & 1 \\ 1 & 1 \end{pmatrix} q_1 - \begin{pmatrix} 2 & 1 \\ 1 & 2 \end{pmatrix} q_2 
\quad &V_R = -\frac{1}{2} \begin{pmatrix} 1 & 1 \\ 1 & 1 \end{pmatrix} q_2 - \begin{pmatrix} 2 & 1 \\ 1 & 2 \end{pmatrix} q_R 
\\&|V_0|^{\frac{1}{2}}= \frac{1}{2} \begin{pmatrix} \sqrt{3}+1 & \sqrt{3} -1 \\ \sqrt{3} -1 & \sqrt{3} +1 \end{pmatrix} Q_3. 
\end{align*}
Here we have expanded
\begin{align*}
Q_p^{p-1}(x) = Q_3^2(x) + \eps q_1(x) + \eps^2 q_2(x) + \eps^3 q_R(x)
\end{align*}
and the computation gives
\begin{align*}
Q_3^2(x)&=2\sech^2x,
\quad \quad q_1(x)=\sech^2x\left(\frac{1}{2} - 2x\tanh x \right) \\ 
q_2(x)&=\frac{1}{2} \left( 2x^2 \tanh^2x \sech^2x -x^2 \sech^4x -x \tanh x \sech^2x \right). 
\end{align*}
By Taylor's theorem, the remainder term $q_R(x)$ satisfies an 
estimate of the form $|q_R(x)| \leq C (1+ |x|^3) \sech^2(x/2)$ for some constant $C$ which is uniform in $x$ and $\eps \in (-1,1)$.  
We will henceforth write 
\begin{align*}
Q \mbox{ for } Q_3 \quad  \mbox{ and } \quad K_{\alpha,\eps} \mbox{ for } K_{\alpha,p}. 
\end{align*}

\section{Some Preliminaries}\label{prelim}

We study~\eqref{B-S}, that is:
\begin{align}\label{unexp}
(K_{\alpha,\eps} +1)w=0. 
\end{align}
Using the expansions~\eqref{resolvexp} and~\eqref{potexp}
for $R^{(\alpha)}$ and $V^{(p)}$ we make the following expansion
\begin{equation} \label{Kexp}
\begin{split}
K_{\alpha,\eps}&= \frac{1}{\alpha}\left( K_{-10} + \eps K_{-11}+\eps^2K_{-12} + \eps^3K_{R1} \right) \\
 &\quad + K_{00} + \eps K_{01} + \eps^2 K_{02} + \eps^3 K_{R2} \\
 &\quad + \alpha K_{10} + \alpha \eps K_{R3} \\
 &\quad + \alpha^2 K_{R4}
\end{split}
\end{equation}
where $K_{R4}$ is uniformly bounded and continuous in $\alpha>0$
and $\eps$ in a neighbourhood of $0$, as an operator on $L^2(\R,\C^2)$.
 
Before stating the main theorem we assemble some necessary facts about the above operators. 

\begin{lemma} \label{bdd}
Each operator appearing in the expansion~\eqref{Kexp}
for $K_{\alpha,\eps}$ is a Hilbert-Schmidt (so in particular
bounded and compact) operator from $L^2(\R,\C^2)$ to itself. 
\end{lemma}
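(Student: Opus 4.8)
The plan is to show each of the finitely many operators in the expansion \eqref{Kexp} is Hilbert--Schmidt by exhibiting an $L^2(\R^2)$ bound on its integral kernel, i.e. verifying $\int\!\!\int \|T(x,y)\|^2\,dx\,dy < \infty$ where $\|\cdot\|$ is any fixed matrix norm. Each operator has the schematic form $T = |V_0|^{1/2} R\, W |V_0|^{-1/2}$, where $R$ is one of $R_{-1}, R_0, R_1$ (or $R_R$ truncated appropriately) and $W$ is one of $V_0, V_1, V_2$ (or $V_R$); hence its kernel is $T(x,y) = |V_0|^{1/2}(x)\, R(x,y)\, W(y)\, |V_0|^{-1/2}(y)$. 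The key point is that $|V_0|^{1/2}$ and $W$ decay exponentially — indeed $|V_0|^{1/2}$ is a matrix times $Q(x) = \sech x$, and the potentials $V_0, V_1, V_2$ are matrices with entries that are (polynomials in $x$) times $\sech^2 x$ or similar — while $|V_0|^{-1/2}(y)$ grows only like $e^{|y|}$ and the kernels $R(x,y)$ grow at most polynomially in $|x-y|$. So in the product, the $|V_0|^{1/2}(x)$ factor kills all $x$-growth and the $W(y)|V_0|^{-1/2}(y)$ factor still decays like $e^{-|y|}$ times a polynomial (the $\sech^2$ beats the $e^{|y|}$); the polynomial growth of $R(x,y)$ in $|x-y|$ is then harmless.

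The steps I would carry out, in order. First, record the explicit decay/growth bounds: $|V_0|^{1/2}(x) \lesssim e^{-|x|}$ entrywise (from $Q = \sech x$), $|V_0|^{-1/2}(x) \lesssim e^{|x|}$ entrywise, and each of $V_0(x), V_1(x), V_2(x)$ is bounded entrywise by $C(1+|x|)^2 e^{-2|x|}$ (using the explicit formulas for $Q_3^2, q_1, q_2$ in \eqref{potexp} and the $\sech^2$, $x\tanh x\,\sech^2 x$, etc. that appear). Second, record the kernel bounds for the resolvent pieces: $R_{-1}(x,y)$ is bounded, $R_0(x,y)$ and $R_1(x,y)$ are bounded by $C(1+|x-y|)^2$, and in the $R_R$-containing terms ($K_{R1}, K_{R2}, K_{R3}, K_{R4}$) the paper already states $R_R(x,y)$ grows at most quadratically in $|x-y|$, uniformly in $\alpha$; likewise the $V_R$-containing terms carry the Taylor bound $|q_R(x)| \le C(1+|x|^3)\sech^2(x/2)$ stated after \eqref{potexp}, which still gives exponential decay in $y$. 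Third, assemble: for a typical term, $\|T(x,y)\| \lesssim e^{-|x|}\,(1+|x-y|)^2\,(1+|y|)^3\,e^{-2|y|}\,e^{|y|} \lesssim e^{-|x|}(1+|x-y|)^2(1+|y|)^3 e^{-|y|}$, and since $(1+|x-y|)^2 \lesssim (1+|x|)^2(1+|y|)^2$ this is bounded by an integrable function of $(x,y)$; squaring and integrating in $x$ then $y$ converges. Fourth, note this is a finite list of operators, so checking them one by one (they all have the same structure) suffices.

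The main obstacle — really the only nontrivial point — is the factor $|V_0|^{-1/2}$, whose exponential \emph{growth} $e^{|y|}$ could a priori destroy square-integrability. This is handled by observing that $|V_0|^{-1/2}$ always appears multiplied on its right by a potential $W \in \{V_0, V_1, V_2, V_R\}$ (this is how $K_{\alpha,\eps}$ was built from $R^{(\alpha)} V^{(p)} |V_0|^{-1/2}$ in \eqref{B-S}), and each such $W$ decays like $\sech^2$, beating the growth with exponential room to spare; the mild polynomial factors from $q_1, q_2, q_R$ and from the resolvent kernels are absorbed harmlessly. A secondary bookkeeping point is uniformity in $\alpha > 0$ and in $\eps$ near $0$ for the remainder terms $K_{R1},\dots,K_{R4}$, but this follows directly from the uniform bounds on $R_R$ and $q_R$ already quoted in Section~\ref{secsetup}. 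Everything else is routine Gaussian-type integral estimation.
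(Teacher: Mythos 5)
Your overall strategy is exactly the paper's: write each operator's kernel as $|V_0|^{1/2}(x)\,R(x,y)\,W(y)\,|V_0|^{-1/2}(y)$, use the $\sech$-decay of $|V_0|^{1/2}$ and of the potentials to beat the $e^{|y|}$ growth of $|V_0|^{-1/2}$ and the polynomial growth of the resolvent kernels, and conclude square-integrability of the kernel. For the terms built from $V_0$, $V_1$, $V_2$ your assembled bound is fine.

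There is, however, one concrete step that fails as written: the $V_R$-containing remainders $K_{R1},K_{R2},K_{R3},K_{R4}$. You invoke the Taylor bound $|q_R(y)|\le C(1+|y|^3)\sech^2(y/2)$ and assert that this ``still gives exponential decay in $y$'' after multiplication by $|V_0|^{-1/2}(y)$. It does not: $\sech^2(y/2)\sim 4e^{-|y|}$ is \emph{exactly} cancelled by the $e^{|y|}$ growth of $|V_0|^{-1/2}(y)$, so $V_R(y)|V_0|^{-1/2}(y)$ is merely polynomially bounded, and the resulting kernel bound $e^{-|x|}(1+|x-y|)^2(1+|y|^3)$ is not square-integrable in $y$. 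The paper closes precisely this loophole by using a sharper, uniform-in-$\eps$ decay rate for the remainder: restricting to $|\eps|<\tfrac12$, the intermediate exponent in Taylor's theorem gives $V_R$ decay at worst like $\sech^2(3y/4)\sim e^{-3|y|/2}$, so that after multiplying by $e^{|y|}$ a net factor $e^{-|y|/2}$ survives and the Hilbert--Schmidt norm is finite. Your argument is repaired by exactly this observation (any restriction $|\eps|\le\eps_0$ with $\eps_0<1$, giving decay rate strictly faster than $e^{-|y|}$, works); without it, the borderline bound you quote is insufficient for the remainder terms.
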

\begin{proof}
This is a straightforward consequence of the spatial decay of the weights 
which surround the resolvent. 
The facts that $\| |V_0|^{-\frac{1}{2}} \| \leq C/\sech(x)$, and that 
$|V_0|^{1/2}$ decays like $\sech(x)$, while each of $V_0$, $V_1$, $V_2$, decay like $\sech^2(y)$, and $V_R$ decays at worst
like $\sech^2(3x/4)$ (say if we restrict to $|\eps| < \frac{1}{2}$)
imply easily that these operators all have square integrable integral kernels.
\end{proof}

We will also need the projections $P$ and $\overline{P}$ which are defined as follows: for $f \in L^2$ let
\begin{align*}
Pf:=\frac{\langle v,f\rangle v}{\|v\|^2}, \quad v:=|V_0|^{\frac{1}{2}} \left(
\begin{array}{c}
1\\
0\\
\end{array}
\right)
\end{align*}
as well as the complementary $\overline{P}:=1-P$. 
A direct computation shows that for any $f \in L^2$ we have
\begin{align} \label{genproj}
  K_{-10}f = -4 P f.
\end{align}
Note that all operators in the expansion containing $R_{-1}$ return 
outputs in the direction of $v$. 

\begin{lemma}\label{kernel}
The operator $\overline{P}(K_{00}+1)\overline{P}$ has a one dimensional kernel spanned by 
\begin{align*}
  w_0 := |V_0|^{1/2} u_0
\end{align*} 
as an operator from \emph{Ran}$(\overline{P})$ to \emph{Ran}$(\overline{P})$. 
\end{lemma}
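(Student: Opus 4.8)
The plan is to characterize the kernel of $\overline{P}(K_{00}+1)\overline{P}$ directly from the Birman--Schwinger correspondence at $\alpha = 0$, $\eps = 0$, using the known resonance $u_0$ from~\eqref{res}. First I would unravel the definition of $K_{00}$. From the expansions~\eqref{resolvexp} and~\eqref{potexp}, the $\alpha^0\eps^0$ term is $K_{00} = |V_0|^{1/2} R_0 V_0 |V_0|^{-1/2}$, where $R_0$ has the kernel displayed after~\eqref{resolvexp}. The strategy is to show that $w \in \mathrm{Ran}(\overline{P})$ solves $\overline{P}(K_{00}+1)\overline{P}w = 0$ if and only if $(K_{00}+1)w$ lies in $\mathrm{Ran}(P) = \mathrm{span}(v)$, i.e.\ $(K_{00}+1)w = cv$ for some scalar $c$; then I would translate this back, via $u = |V_0|^{-1/2}w$, into a statement about the differential operator $\mathcal{L}_3$ acting on $u$, and identify the solution space.

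The key computational step is the reverse Birman--Schwinger move: if $w = |V_0|^{1/2}u$ and $(K_{00}+1)w = cv = c|V_0|^{1/2}\begin{pmatrix}1\\0\end{pmatrix}$, then applying $|V_0|^{-1/2}$ gives $R_0 V_0 u + u = c\begin{pmatrix}1\\0\end{pmatrix}$, and applying the constant-coefficient operator $\sigma_3\tilde H - 1$ (the $\alpha\to 0$, $z\to 1$ limit of $\sigma_3\tilde H - 1 + \alpha^2$, noting $R_0$ is the regular part of $(\sigma_3\tilde H - 1)^{-1}$ on the relevant subspace) should yield $(\mathcal{L}_3 - 1)u = 0$ up to the contribution of the singular part $R_{-1}$ and the constant $c$. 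Concretely, the first component of $R^{(\alpha)}$ is $(-\partial_x^2 + \alpha^2)^{-1}$, whose kernel $\frac{1}{2\alpha}e^{-\alpha|x-y|}$ expands as $\frac{1}{2\alpha} - \frac{|x-y|}{2} + O(\alpha)$; the $\frac1\alpha$ piece is exactly $R_{-1}$ (which produces $-4P$, landing in $\mathrm{span}(v)$), and $R_0$'s first component $-|x-y|/2$ is a right inverse of $-\partial_x^2$ modulo constants. So $(\mathcal{L}_3 - 1)u = 0$ exactly when $w$ is in the kernel of the projected operator, with the freedom of adding constants/the $P$-direction absorbed into the $\overline P$ projections and the scalar $c$.

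Thus the kernel of $\overline P(K_{00}+1)\overline P$ on $\mathrm{Ran}(\overline P)$ corresponds to the bounded (more precisely, appropriately non-growing) solutions $u$ of $(\mathcal{L}_3 - 1)u = 0$ of the form $u = |V_0|^{-1/2}w$ with $w\in L^2$, modulo the $v$-direction. I would then invoke that the threshold resonance space for $\mathcal{L}_3$ at $z = 1$ is spanned by $u_0$ — this is the content of~\eqref{res}--\eqref{reseq}, combined with the standard ODE fact that the second, exponentially growing solution of the relevant Schr\"odinger-type system is excluded by requiring $w = |V_0|^{1/2}u \in L^2$ (since $|V_0|^{1/2}\sim \sech x$ decays only exponentially, it cannot tame an exponentially growing $u$ at the correct rate for the second component, while the first component's growth is polynomial and handled by the $\overline P$ projection). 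Hence $w_0 = |V_0|^{1/2}u_0$ spans the kernel, and it remains only to check $w_0 \in \mathrm{Ran}(\overline P)$, i.e.\ $\langle v, w_0\rangle$ does not force $w_0$ out — actually one checks $\overline P w_0 \neq 0$ so that $w_0$ genuinely descends to a nonzero kernel element, which follows since $w_0 \notin \mathrm{span}(v)$ (their second components differ: $u_0$ has a nonzero second component $-Q^2$ while $v$'s second component vanishes).

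The main obstacle I expect is the careful bookkeeping at the threshold: making precise in what function space $R_0$ inverts $-\partial_x^2$, controlling the polynomially-growing first component of $u_0$ (note $u_0 = 2(\tanh^2 x, -\sech^2 x)^T$ is actually bounded, but intermediate objects like $R_0 V_0 u$ involve $|x-y|$ kernels against $\sech^2$-decaying potentials, producing bounded-but-not-decaying outputs), and confirming that the single projection $\overline P$ suffices to kill exactly the right one-dimensional obstruction so that the kernel is one-dimensional rather than larger. One must rule out a second kernel direction coming from solutions of $(\mathcal{L}_3-1)u = $ (multiple of the $R_{-1}$ output) that are not genuine resonances; this is where the structure of $V_0$ and the explicit form of $R_0$'s second (exponentially decaying) component — which is a genuine bounded inverse with no kernel — must be used to pin down uniqueness.
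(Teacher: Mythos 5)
Your overall skeleton matches the paper's: reduce $\overline{P}(K_{00}+1)\overline{P}w=0$ to $(K_{00}+1)w=cv$, pass to $u=|V_0|^{-1/2}w$, and connect with the threshold equation for $\mathcal{L}_3$. But there is a genuine gap at the heart of the argument: you ``invoke that the threshold resonance space for $\mathcal{L}_3$ at $z=1$ is spanned by $u_0$'' and attribute this to \eqref{res}--\eqref{reseq}. Those equations only exhibit $u_0$ as \emph{a} resonance; they say nothing about one-dimensionality, and that is precisely the nontrivial content of the lemma. The ODE system obtained from $(\mathcal{L}_3-1)u=0$ has a four-dimensional solution space; ruling out $e^{\sqrt{2}|x|}$ growth in the second component via $w=|V_0|^{1/2}u\in L^2$ (as you do) still generically leaves a two-dimensional space, because a first component growing linearly in $x$ is perfectly compatible with $|V_0|^{1/2}u\in L^2$ (the $\sech$ weight kills polynomial growth). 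Your claim that this direction is ``handled by the $\overline{P}$ projection'' does not hold up: $\overline{P}$ is only the rank-one condition $\langle v,w\rangle=0$ and does not exclude such a solution. What actually excludes it in the paper is the integral equation itself: rewriting $(K_{00}+1)w=cv$ in the form \eqref{u1u2} (using $\langle w,v\rangle=0$ to remove the linear-in-$x$ term), one shows $u\to(c+b,0)^T$ as $x\to\infty$; then one proves that a solution with $c+b=0$ vanishes for large $x$ and hence, by uniqueness for the differentiated system \eqref{syst1}--\eqref{syst2}, vanishes identically; finally any two kernel elements are combined to produce a zero-limit solution, giving linear dependence. Without some version of this asymptotic/uniqueness argument (the adaptation of Theorem 5.2 of \cite{Jens}), your proof assumes exactly what must be proved.

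Two smaller points. First, membership of $w_0$ in $\mathrm{Ran}(\overline{P})$ is the statement $Pw_0=0$, i.e.\ $\langle v,w_0\rangle=0$; your check that $w_0\notin\mathrm{span}(v)$ only gives $\overline{P}w_0\neq 0$, which is not the same thing. In the paper $Pw_0=0$ follows from $[V_0u_0]_1=[u_0]_1''$, whose integral vanishes. Second, that $w_0$ actually lies in the kernel rests on the identity $(K_{00}+1)w_0=2v$, which requires a direct computation with the kernel of $R_0$ (integration by parts against $[u_0]_1''$ and the explicit second component); your sketch gestures at this via ``$R_0$ is a right inverse of $-\partial_x^2$ modulo constants'' but does not carry it out, and the constant $2v$ (rather than $0$) is exactly why the projection $\overline{P}$ is needed in the statement.
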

\begin{proof}
First note that by~\eqref{reseq}
\begin{align} \label{reseq2}
  V_0 u_0 = \sigma_3 u_0 - \tilde{H} u_0, \qquad
  \left[ V_0 u_0 \right]_1 = \left[ u_0 \right]_1''
\end{align}
from which it follows that 
\begin{align*}
  P w_0 = 0, \quad \mbox{ i.e. } w_0 \in \text{Ran}(\overline{P}).
\end{align*}   
Then a direct computation using~\eqref{reseq2}, the expansion~\eqref{Kexp},
the expression for $R_0$, and integration by parts, shows that
\begin{align*}
  (K_{00}+1) w_0 = 2 v
\end{align*} 
and so indeed $\overline{P}(K_{00}+1)\overline{P}w_0=0$. 

Theorem 5.2 in \cite{Jens} shows that the kernel of the analogous scalar operator can be at most one dimensional. We will use this argument, adapted to 
the vector structure, to show that any two non-zero elements of the kernel must be multiples of each other. 
Take $w \in L^2$ with $\langle w,v \rangle=0$ and $\overline{P}(K_{00}+1)w=0$. That is $(K_{00}+1)w=cv$ for some constant $c$. This means
\begin{align*}
|V_0|^{\frac{1}{2}} R_0 V_0|V_0|^{-\frac{1}{2}}w + w = c |V_0|^{\frac{1}{2}}
\left(
\begin{array}{c}
1\\
0\\
\end{array}
\right).
\end{align*}
Let $w=|V_0|^{\frac{1}{2}}u$ where $u=
\left(
\begin{array}{c}
u_1\\
u_2\\
\end{array}
\right)$. We then obtain, after rearranging and expanding
\begin{align*}
\left(
\begin{array}{c}
u_1\\
u_2\\
\end{array}
\right)
=
\left(
\begin{array}{c}
c - \frac{1}{2} \int_\R |x-y| Q^2(y) \left( 2u_1(y)+u_2(y) \right) dy \\
\frac{1}{2 \sqrt{2}} \int_\R \exp \left( -\sqrt{2} |x-y|\right) Q^2(y)  (u_1(y) + 2 u_2(y)) dy \\
\end{array}
\right). 
\end{align*}
We now rearrange the first component. Expand 
\begin{align*}
-\frac{1}{2}\int_\R |x-y| Q^2(y) &(2u_1(y)+u_2(y)) dy \\
= &-\frac{1}{2} \int_{-\infty}^x (x-y) Q^2(y) (2u_1(y)+u_2(y)) dy \\
  &- \frac{1}{2} \int_x^\infty (y-x) Q^2(y) (2u_1(y)+u_2(y)) dy
\end{align*}
and rewrite the first term as 
\begin{align*}
&-\frac{x}{2} \int_{-\infty}^x  Q^2(y) (2u_1(y)+u_2(y)) dy + \frac{1}{2} \int_{-\infty}^x  y Q^2(y) (2u_1(y)+u_2(y)) dy \\
&= \frac{x}{2} \int_x^\infty Q^2(y) (2u_1(y)+u_2(y)) dy + b - \frac{1}{2} \int_x^\infty y Q^2(y) (2u_1(y)+u_2(y)) dy
\end{align*}
where
\begin{align*}
b:= \frac{1}{2} \int_\R  y Q^2(y) (2u_1(y)+u_2(y)) dy
\end{align*}
and where we used $\int_\R 2 Q^2 u_1 + Q^2 u_2 =0$ 
since $\langle w,v \rangle=0$. So putting everything back together we see
\begin{align}\label{u1u2}
\left(
\begin{array}{c}
u_1\\
u_2\\
\end{array}
\right)
=
\left(
\begin{array}{c}
c+b +  \int_x^\infty (x-y) Q^2(y) \left( 2u_1(y)+u_2(y) \right) dy \\
\frac{1}{2 \sqrt{2}} \int_\R \exp \left( -\sqrt{2} |x-y|\right) Q^2(y)  (u_1(y) + 2 u_2(y)) dy \\
\end{array}
\right). 
\end{align}

We claim that as $x \rightarrow \infty$
\begin{align*}
\left(
\begin{array}{c}
u_1\\
u_2\\
\end{array}
\right)
\rightarrow
\left(
\begin{array}{c}
c+b\\
0 \\
\end{array}
\right). 
\end{align*}
Observe
\begin{align*}
\bigg|\int_x^\infty (x-y) Q^2(y) \left( 2u_1(y)+u_2(y) \right) dy\bigg| 
&\leq 
\int_x^\infty |y-x| Q^2(y) | 2u_1(y)+u_2(y) | dy \\
& \leq 
\int_x^\infty |y|  Q^2(y) | 2u_1(y)+u_2(y) | dy \\
& \rightarrow 0
\end{align*}
as $x \rightarrow \infty$. Here we have used the fact that $w \in L^2$ implies $Q|2u_1+u_2| \in L^2$ and that $|y|Q \in L^2$. 
As well, in the second component
\begin{align*}
\int_\R e^{  -\sqrt{2} |x-y|} Q^2(y)  &(u_1(y) + 2 u_2(y)) dy \\
= & e^{-\sqrt{2} x} \int_{-\infty}^x e^{\sqrt{2}y} Q^2(y)  (u_1(y) + 2 u_2(y)) dy \\
&+ e^{\sqrt{2} x} \int_{x}^\infty e^{-\sqrt{2}y} Q^2(y)  (u_1(y) + 2 u_2(y)) dy 
\end{align*}
and 
\begin{align*}
\bigg| &e^{-\sqrt{2} x} \int_{-\infty}^x e^{\sqrt{2}y} Q^2(y)  (u_1(y) + 2 u_2(y)) dy \bigg|\\
&\leq 
e^{-\sqrt{2} x} \int_{-\infty}^x e^{\sqrt{2}y} Q^2(y)  |u_1(y) + 2 u_2(y)| dy \\
&\leq
e^{-\sqrt{2} x} \left( \int_{-\infty}^x e^{2\sqrt{2}y} Q^2(y) dy\right)^{1/2}    \left( \int_{-\infty}^x Q^2(y) |u_1(y) + 2 u_2(y)|^2 dy \right)^{1/2} \\
&\leq
C e^{-\sqrt{2} x} \left( \int_{-\infty}^x e^{2\sqrt{2}y} Q^2(y) dy\right)^{1/2}\\
&\leq
C e^{-\sqrt{2} x} \left( \int_{-\infty}^x e^{2\sqrt{2}y} e^{-2y} dy\right)^{1/2}\\
&\leq
C e^{-\sqrt{2} x} \left( e^{-2\sqrt{2}x}e^{-2x} \right)^{1/2} \leq C e^{-x} \rightarrow 0, \quad x \rightarrow \infty
\end{align*}
where we again used $Q|u_1+2u_2| \in L^2$. Similarly,
\begin{align*}
\bigg| e^{\sqrt{2} x} \int_{x}^\infty e^{-\sqrt{2}y} Q^2(y)  (u_1(y) + 2 u_2(y)) dy \bigg| \rightarrow 0
\end{align*}
as $x \rightarrow \infty$ which addresses the claim.

Next we claim that if $c+b=0$ in (\ref{u1u2}) then $u \equiv 0$. To address the claim we first note that if $c+b=0$ then $u \equiv 0$ for all $x \geq X$ for some $X$, by estimates similar to those just done. 
Finally, we appeal to ODE theory. Differentiating (\ref{u1u2}) in $x$ twice returns the system
\begin{align}
 u_1'' &= -2Q^2 u_1 - Q^2 u_2 \label{syst1}\\
 u_2'' -2u_2 &= -Q^2 u_1 - 2Q^2 u_2. \label{syst2}
\end{align}
Any solution $u$ to the above with $u \equiv 0$ for all large enough $x$ must be identically zero.

With the claim in hand we finish the argument. Given two non-zero elements of the kernel, say $u$ and $\tilde{u}$ with limits as $x \to \infty$  (written as above)
$c+b$ and $\tilde{c}+\tilde{b}$ respectively, the combination 
\begin{align*}
u^*=u- \frac{c+b}{\tilde{c}+\tilde{b}} \tilde{u} 
\end{align*}
satisfies (\ref{u1u2}) but with $u^*(x) \to 0$ as $x \to \infty$, 
and so $u^* \equiv 0$. 
Therefore, $u$ and $\tilde{u}$ are linearly dependent, as required.  
\end{proof}
\begin{remark} \label{truerem}
Arguments similar to the estimates in the above Lemma \ref{kernel} show that 
for $\alpha > 0$ and $w \in L^2$ solving \emph{(\ref{B-S})} the corresponding eigenfunction of \emph{(\ref{spec})} $u=|V_0|^{-\frac{1}{2}}w$ is in $L^2$ and so the eigenvalue $z=1-\alpha^2$ is in fact a true eigenvalue. 
\end{remark} 
 
Note that $K_{00}$, and hence $\overline{P}(K_{00}+1)\overline{P}$, is self-adjoint. Indeed, a direct computation shows that $V_0=-|V_0|$
and so 
\begin{align*}
K_{00}&=|V_0|^{\frac{1}{2}} R_0 V_0 |V_0|^{-\frac{1}{2}} \\
&=-|V_0|^{\frac{1}{2}} R_0 |V_0|^\frac{1}{2} \\
&=|V_0|^{-\frac{1}{2}}V_0 R_0 |V_0|^\frac{1}{2} \\
&=(K_{00})^{*}.
\end{align*}
As we have seen above in Lemma \ref{bdd}, thanks to the decay of the potential, $\overline{P} K_{00} \overline{P}$ is a compact operator. Therefore, the simple eigenvalue $-1$ of $\overline{P} K_{00} \overline{P}$ is isolated and so 
\begin{align} \label{invert}
  ( \overline{P}(K_{00}+1)\overline{P} )^{-1}
  :  \{ v, \; w_0 \}^{\perp} \to  \{ v, \; w_0 \}^{\perp}
\end{align}  
exists and is bounded. 

Finally, with the above facts assembled, we are now in a position to state the main theorem.

\section{Bifurcation Analysis}\label{secmain}

This section is devoted to the proof of the main result: 
\begin{theorem} \label{main}
There exists $\eps_0>0$ such that for $-\eps_0 \leq \eps \leq \eps_0$ with $\eps \neq 0$ the eigenvalue problem \emph{(\ref{unexp})} has a solution $(\alpha,w)$ 
of the form
\begin{equation}\label{expansion}
  \begin{aligned}
	w&=w_0+\eps w_1+\eps^2w_2 + \tilde{w} \\
	\alpha&=\eps^2 \alpha_2 + \tilde{\alpha}
  \end{aligned}
\end{equation}
where $\alpha_2 > 0$, $w_0$, $w_1$, $w_2$ are known (given below),
and $|\tilde{\alpha}|<C|\eps|^3$ and $\|\tilde{w}\|_{L^2}<C|\eps|^3$ for some $C>0$. 
\end{theorem}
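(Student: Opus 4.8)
The plan is to carry out a Lyapunov–Schmidt reduction of the equation $(K_{\alpha,\eps}+1)w=0$ with respect to the decomposition $L^2 = \mathrm{Ran}(P) \oplus \mathrm{Ran}(\overline P)$, and then a second reduction inside $\mathrm{Ran}(\overline P)$ using the one-dimensional kernel $\{w_0\}$ from Lemma~\ref{kernel}. Writing $w = s\,w_0 + \eta$ with $\eta \in \{v,w_0\}^\perp$ and $s \in \C$, the equation splits into three scalar/vector pieces: the $P$-component (projection onto $v$), the $w_0$-component (projection onto $w_0$ inside $\mathrm{Ran}(\overline P)$), and the genuinely infinite-dimensional $\{v,w_0\}^\perp$-component. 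The key structural fact, already recorded in~\eqref{genproj}, is that the leading singular term $\frac1\alpha K_{-10} = -\frac4\alpha P$ lives entirely in $\mathrm{Ran}(P)$; hence the $P$-component of the equation, solved for the relation it imposes, will produce the bifurcation equation that ultimately determines $\alpha$ in terms of $\eps$. The $\{v,w_0\}^\perp$-component is solved first: on that subspace $\overline P(K_{00}+1)\overline P$ is boundedly invertible by~\eqref{invert}, and all remaining terms in~\eqref{Kexp} carry at least one power of $\alpha$ or $\eps$, so the implicit function theorem yields $\eta = \eta(s,\alpha,\eps)$ with $\eta = O(|\eps| + \alpha)$ (after normalizing $s$), analytic in the parameters.

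Next I would substitute $\eta(s,\alpha,\eps)$ into the $w_0$-component and the $P$-component. The $w_0$-component (the equation $\langle w_0, \overline P(K_{\alpha,\eps}+1)w\rangle = 0$) does not contain the $-\frac4\alpha P$ term and, using $(K_{00}+1)w_0 = 2v \perp w_0$, one checks its leading behaviour is $O(\eps)$; this equation can be solved for $s$ as a function of $(\alpha,\eps)$, or used to pin down the normalization, feeding the expansion $w = w_0 + \eps w_1 + \eps^2 w_2 + \tilde w$. The coefficients $w_1, w_2$ are then computed explicitly by matching powers of $\eps$: $w_1$ solves $\overline P(K_{00}+1)\overline P w_1 = -\overline P(K_{01}w_0 + \alpha\text{-terms})$ modulo $w_0$, and similarly for $w_2$, using the stated kernels of $R_0, R_1$ and the explicit $V_1, V_2$. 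Finally, the $P$-component of $(K_{\alpha,\eps}+1)w = 0$, after dividing out the overall $\frac1\alpha$, reads schematically
\begin{align*}
  -4\langle v, w\rangle/\|v\|^2 + \alpha\bigl(\langle v, (K_{00}+1)w\rangle/\|v\|^2\bigr) + \eps\langle v, K_{-11}w\rangle/\|v\|^2 + \cdots = 0,
\end{align*}
and plugging in $w = w_0 + \eps w_1 + \eps^2 w_2 + \cdots$ (recall $\langle v, w_0\rangle = 0$, i.e. $Pw_0 = 0$) forces the lowest surviving order to balance $\alpha$ against $\eps^4$. Tracking the coefficients carefully gives $\alpha = \eps^2\alpha_2 + O(|\eps|^3)$ with $\alpha_2 = \sqrt{c/\|v\|^2}$ (or similar) for an explicit constant $c$ built from $\langle v, K_{-1j}w_k\rangle$ and $\langle v, K_{00}w_j\rangle$ type pairings; positivity of $\alpha_2^2$ is exactly the coefficient whose sign is checked numerically in Section~\ref{secnum}.

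The main obstacle is the bookkeeping around the double reduction and the strong $\frac1\alpha$ singularity: one must verify that every term in~\eqref{Kexp} that could contribute at or below order $\eps^4$ in the $P$-equation is accounted for, including the cross terms $\alpha\eps K_{R3}$, $\eps^3 K_{R1}/\alpha$, and $\eps^2 K_{02}$, and that after substituting $\eta = O(|\eps|+\alpha)$ and the ansatz $\alpha \sim \eps^2$ these remainders are genuinely $o(\eps^4)$ — this is where the precise order of vanishing of $\overline P K_{01} w_0$, $\overline P K_{-11} w_0$, etc. in the relevant directions matters, and a naive count would miss the degeneracy that pushes the leading term from $\eps^2$ (generic) down to $\eps^4$. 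Once the scalar bifurcation function $F(\alpha,\eps) = 0$ is in hand with $F(\alpha,\eps) = -(4/\|v\|^2)\alpha\cdot(\text{const}) + (\text{const})\,\eps^4/\alpha + \cdots$ — equivalently $\alpha^2 = (\text{const})\,\eps^4(1+o(1))$ — a final application of the implicit function theorem (after the substitution $\alpha = \eps^2 a$, which desingularizes) produces the unique positive root $a = \alpha_2 + O(|\eps|)$, completing the proof.
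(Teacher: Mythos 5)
Your reduction puts the bifurcation equation in the wrong component, and this is a genuine gap, not a bookkeeping issue. The $v$-paired ($P$) equation contains $-\tfrac{4}{\alpha}Pw$ with a nonvanishing coefficient, and the $v$-component of $w$ is a free unknown (your ansatz $w=s\,w_0+\eta$, $\eta\in\{v,w_0\}^\perp$, actually omits it, even though the true solution has $Pw_1=\tfrac14K_{-11}w_0\neq0$ already at order $\eps$); consequently the $P$-equation merely slaves the $v$-component order by order and can always be satisfied --- it never produces a balance of the form $-c_1\alpha+c_2\eps^4/\alpha=0$. Conversely, the $w_0$-component equation cannot be ``solved for $s$'': since $(K_{00}+1)w_0=2v\perp w_0$, the coefficient of $s$ there vanishes at leading order, which is exactly why it is the solvability (bifurcation) condition. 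In the paper $\alpha$ is determined from that condition \emph{linearly}: at order $\eps$ the identity $\langle w_0,\tfrac14K_{00}K_{-11}w_0+K_{01}w_0\rangle=0$ holds (this is the degeneracy that forces $\alpha=O(\eps^2)$ rather than $O(\eps)$), and at order $\eps^2$ the orthogonality requirement for inverting $\overline{P}(K_{00}+1)\overline{P}$ yields $\alpha_2$ as the explicit ratio of inner products given after Theorem~\ref{main}, whose positivity is the quantity checked numerically in Section~\ref{secnum}. Your claimed bifurcation function $F(\alpha,\eps)=-c_1\alpha+c_2\eps^4/\alpha+\cdots$, hence $\alpha^2\sim c\,\eps^4$ and $\alpha_2=\sqrt{c/\|v\|^2}$, is not the structure of this problem; with your restricted ansatz ($Pw\equiv0$) the $P$-equation would in fact already be violated at order $\eps$, not balanced at order $\eps^4$.

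Two further points would need repair even after fixing the above. First, the resolvent remainders (and hence $K_{R1}$--$K_{R4}$, $\overline{K}_2$, $\overline{K}_4$) are only continuous in $\alpha>0$, so an implicit-function-theorem argument ``analytic in the parameters'' is not available; the paper instead solves for the $\{v,w_0\}^\perp$ part $W$ by a Banach fixed point (Lemma~\ref{W}) and then handles the two remaining scalars $(\tilde\alpha,\beta)$ --- where $\beta$ is precisely the $v$-component you dropped --- by a Brouwer fixed point for the $2\times2$ system with matrix $A$ (Lemma~\ref{alphabeta}), exactly because only continuity in $\tilde\alpha$ is at hand. Second, when you argue that the $\{v,w_0\}^\perp$ equation is regular, note that it is not smallness of the singular terms that saves you (on the relevant scale $\alpha\sim\eps^2$ the term $\eps/\alpha\sim1/\eps$ is large): all of $K_{-10},K_{-11},K_{-12},K_{R1}$ have range in $\mathrm{span}(v)$ because they contain $R_{-1}$, so they are annihilated by $\overline{P}$; this stronger structural fact, not just $K_{-10}=-4P$, is what removes the $1/\alpha$ singularity from the $\overline{P}$-projected equation.
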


\begin{remark}
This theorem confirms the behaviour observed numerically in \cite{Gust}: for $p\neq 3$ but close to $3$, the linearized operator
$J \hat{H}$ (which is unitarily equivalent to $i {\cal L}_p$) has true, purely imaginary eigenvalues in the gap between the branches of essential spectrum, which approach the thresholds as $p \rightarrow 3$. Note Remark \ref{truerem} to see that $u = |V_0|^{-\frac{1}{2}}w$ is a true $L^2$ eigenfunction of \emph{(\ref{spec})}. 
In addition, the eigenfunction approaches the resonance eigenfunction in some weighted $L^2$ space. 
Furthermore, we have found that $\alpha^2$, the distance of the eigenvalues from the thresholds, is to leading order proportional to $(p-3)^4$. 
Finally, note that $\alpha = \eps^2 \alpha_2  + O(\eps^3)$ with $\alpha_2>0$ gives $\alpha>0$ for both $\eps >0$ and $\eps <0$, 
ensuring the eigenvalues appear on {\it both} sides of $p=3$. 
\end{remark}

The quantities in~(\ref{expansion}) are defined as follows:
\begin{align*}
&w_0 := |V_0|^{\frac{1}{2}}u_0 \\
&Pw_1:= \frac{1}{4} K_{-11}w_0\\
&\overline{P}w_1 := -\big( \overline{P}(K_{00}+1) \overline{P}\big)^{-1} \left(\frac{1}{4}\overline{P}K_{00}K_{-11} w_0 + \overline{P}K_{01} w_0\right) \\
&Pw_2 := \frac{1}{4}\big( K_{-11}w_1 + K_{-12}w_0+\alpha_2(K_{00}+1)w_0 \big)\\
&\overline{P}w_2 := -\big( \overline{P}(K_{00}+1) \overline{P}\big)^{-1} \Bigg( \frac{1}{4}\overline{P} K_{00} K_{-11} w_1 +\frac{1}{4}\overline{P} K_{00} K_{-12}w_0   \\ 
& \quad \quad \quad \quad \quad \quad \quad
+\frac{\alpha_2}{4}\overline{P} K_{00}(K_{00}+1) w_0+\overline{P} K_{01} w_1 + \overline{P} K_{02} w_0 + \alpha_2 \overline{P} K_{10} w_0 \Bigg) \\
&\alpha_2 := \frac{-\frac{1}{4}\langle w_0,  K_{00} K_{-11} w_1\rangle  -\frac{1}{4}\langle w_0,  K_{00} K_{-12}w_0\rangle  -\langle w_0,  K_{01} w_1\rangle  - \langle w_0, K_{02} w_0 \rangle }{ \langle w_0,K_{10}w_0\rangle  + \frac{1}{4}\langle w_0,K_{00}(K_{00}+1)w_0 \rangle   }
\end{align*}
\begin{remark}
A numerical computation shows 
\begin{align*}
  \alpha_2 \approx 2.53/8 > 0.
\end{align*} 
Since the positivity of $\alpha_2$ is crucial to the main result,
details of this computation are described in Section~\ref{secnum}.
\end{remark}
Note that the functions on which $\overline{P}(K_{00}+1)\overline{P}$ is being inverted in the expressions for $\overline{P}w_1$ and $\overline{P}w_2$ are orthogonal to both $w_0$ and $v$, and so these quantities are well-defined
by~\eqref{invert}. The identity
\begin{align*}
\langle w_0,  \frac{1}{4}K_{00}K_{-11} w_0 + K_{01} w_0 \rangle =0
\end{align*}
has been verified analytically. It is because of this identity that the 
$O(\eps)$ term is absent in the expansion of $\alpha$ in (\ref{expansion}). 
The fact that 
\begin{align*}
0=
\langle w_0, \frac{1}{4} K_{00} K_{-11} w_1 +\frac{1}{4}K_{00} K_{-12}w_0 +\frac{\alpha_2}{4}K_{00}(K_{00}+&1) w_0  +K_{01} w_1 \\
&+ K_{02} w_0 + \alpha_2 K_{10} w_0\rangle
\end{align*}
comes from our definition of $\alpha_2$. 

The above definitions, along with~\eqref{genproj}, 
imply the relationships below 
\begin{align} 
0&= K_{-10} w_0 \label{Pw0} \\
0&= K_{-11} w_0 + K_{-10} w_1 \label{Pw1} \\
0&= K_{-10} w_2 +  K_{-11} w_1 + K_{-12} w_0+ \alpha_2 (K_{00} +1) w_0 \label{Pw2} \\
0&=\overline{P}(K_{00}+1)w_1+\overline{P}K_{01}w_0 \label{O1} \\
0&=\overline{P}(K_{00}+1)w_2+\overline{P}K_{01}w_1 + \overline{P}K_{02}w_0 + \alpha_2 \overline{P}K_{10}w_0 \label{O2}
\end{align}
which we will use in what follows. 

Using the expression for $\alpha$ in~\eqref{expansion}, our expansion~\eqref{Kexp} for $K_{\alpha,\eps}$ now takes the form 
\begin{align*}
K_{\alpha,\eps}&= \frac{1}{\alpha}\left( K_{-10} + \eps K_{-11}+\eps^2K_{-12} + \eps^3K_{R1} \right) \\
 &\quad + K_{00} + \eps K_{01} + \eps^2 K_{02} + \eps^3 K_{R2} \\
 &\quad + (\alpha_2\eps^2+\tilde{\alpha}) K_{10} + (\alpha_2\eps^2+\tilde{\alpha}) \eps K_{R3} + (\alpha_2\eps^2+\tilde{\alpha})^2 K_{R4}\\
& =: \frac{1}{\alpha}\left( K_{-10} + \eps K_{-11}+\eps^2K_{-12} + \eps^3K_{R1} \right) + K_{00} + \eps \overline{K}_1 + \tilde{\alpha} \overline{K}_2
\end{align*}
where $\overline{K}_1$ is a bounded (uniformly in $\eps$) operator depending
on $\eps$ but not $\tilde{\alpha}$, while  $\overline{K}_2$ is a bounded 
(uniformly in $\eps$ and $\tilde{\alpha}$) 
operator depending on both $\eps$ and $\tilde{\alpha}$. 

Further decomposing 
\begin{align*}
  \tilde{w}=\beta v +W, \qquad \langle W,v \rangle = 0,
\end{align*} 
we aim to show existence of a solution with the remainder terms $\tilde{\alpha}$, $\beta$ and $W$ small. We do so via a double Lyapunov-Schmidt reduction.

First substitute (\ref{expansion}) to (\ref{unexp}) and apply the projection 
$\overline{P}$ to obtain 
\begin{equation} \label{Pbar}
\begin{split}
0&=\overline{P}(K_{\alpha,\eps} + 1) w \\
&=\overline{P}(K_{\alpha,\eps}+1)(w_0+ \eps w_1 + \eps^2 w_2 + \beta v + W)\\
&=  \overline{P}(K_{00}+1)w_0+\eps \overline{P}(K_{00}+1)w_1 + \eps \overline{P}K_{01}w_0\\
&\quad + \eps^2  \overline{P} (K_{00}+1)w_2 + \eps^2 \overline{P} K_{01} w_1 + \eps^2 \overline{P} K_{02} w_0 + \eps^2 \alpha_2 \overline{P} K_{10} w_0\\
& \quad + \overline{P}(K_{00}+1)(\beta v+W) + \tilde{\alpha} \overline{P}K_{10}w_0 + \overline{P} \left( \eps \overline{K}_1 + \tilde{\alpha} \overline{K}_2 \right)(\beta v +W) \\
& \quad + \eps^3 \overline{P} \left( K_{R2}w_0 +   K_{02} w_1 +  K_{01}w_2  + \eps K_{02}w_2 + \eps K_{R2}w_1 + \eps^2 K_{R2}w_2 \right)\\
 &\quad + (\alpha_2\eps^2+\tilde{\alpha}) \overline{P}K_{10}(\eps w_1 + \eps^2 w_2) + (\alpha_2\eps^2+\tilde{\alpha}) \eps \overline{P}K_{R3}(w_0+\eps w_1 + \eps^2 w_2) \\
& \quad + (\alpha_2\eps^2+\tilde{\alpha})^2 \overline{P}K_{R4}(w_0+\eps w_1 + \eps^2 w_2).
\end{split}
\end{equation}
Making some cancellations coming from Lemma \ref{kernel}, (\ref{O1}) and (\ref{O2}) leads to 
\begin{align*}
-\overline{P}(K_{00}&+1)\overline{P}W=\\
&  \beta\overline{P}K_{00} v + \tilde{\alpha} \overline{P}K_{10}w_0 + \overline{P}  \left( \eps \overline{K}_1 + \tilde{\alpha} \overline{K}_2 \right) (\beta v +W) \\
& \  + \eps^3 \overline{P} \left( K_{R2}w_0 +   K_{02} w_1 +  K_{01}w_2  + \eps K_{02}w_2 + \eps K_{R2}w_1 + \eps^2 K_{R2}w_2 \right)\\
 & \  + (\alpha_2\eps^2+\tilde{\alpha}) \overline{P}K_{10}(\eps w_1 + \eps^2 w_2) + (\alpha_2\eps^2+\tilde{\alpha}) \eps \overline{P}K_{R3}(w_0+\eps w_1 + \eps^2 w_2) \\
&  \   + (\alpha_2\eps^2+\tilde{\alpha})^2 \overline{P}K_{R4}(w_0+\eps w_1 + \eps^2 w_2)\\
&=: \mathcal{F}(W;\eps, \tilde{\alpha},\beta).	
\end{align*}

According to~\eqref{invert}, inversion of  $\overline{P}(K_{00}+1)\overline{P}$ on $\mathcal{F}$ requires the solvability condition
\begin{align} \label{P0}
   P_0 \mathcal{F} = 0, \qquad 
   P_0 := \frac{1}{\| w_0 \|_2^2} \langle w_0, \; \cdot \rangle w_0, 
   \quad \overline{P}_0 := 1 - P_0
\end{align}
which we solve together with the fixed point problem
\begin{align}\label{defW}
  W=\left(-\overline{P}(K_{00}+1)\overline{P}\right)^{-1} 
  \overline{P}_0 \mathcal{F}(W;\eps, \tilde{\alpha},\beta) 
  =: \mathcal{G}(W;\eps, \tilde{\alpha},\beta)
\end{align}
in order to solve~\eqref{Pbar}.

Write
\begin{align*}
\mathcal{F}:=\overline{P}\left( \beta K_{00}v + \tilde{\alpha} K_{10}w_0  + \left( \eps \overline{K}_1 + \tilde{\alpha} \overline{K}_2 \right)(\beta v +W) + \eps^3 f_1 + \eps \tilde{\alpha} f_2 + \tilde{\alpha}^2 h_1   \right)
\end{align*} 
where $f_1$ and $f_2$ denote functions depending on (and $L^2$ bounded uniformly in) $\eps$ but not $\tilde{\alpha}$, while $h_1$ denotes an $L^2$ function depending on (and uniformly $L^2$ bounded in) both $\eps$ and $\tilde{\alpha}$. 
\begin{lemma}\label{W}
For any $M> 0$ there exists $\eps_0> 0$ and $R> 0$ such that for all $-\eps_0 \leq \eps\leq \eps_0$ \emph{(}but $\eps \neq 0$\emph{)} and for all $\tilde{\alpha}$ and $\beta$ with $|\tilde{\alpha}|\leq M |\eps|^3$ and $|\beta|\leq M |\eps|^3$ there exists a unique solution $W \in L^2 \cap \{ v, \; w_0 \}^{\perp}$ of 
$(\ref{defW})$ satisfying $\|W\|_{L^2}\leq R |\eps|^3$. 
\end{lemma}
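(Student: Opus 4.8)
\textbf{Proof proposal for Lemma~\ref{W}.}

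The plan is to solve the fixed-point equation~\eqref{defW} by the contraction mapping principle on the closed ball $B_R := \{ W \in L^2 \cap \{v,w_0\}^\perp : \|W\|_{L^2} \leq R|\eps|^3 \}$, with $R$ chosen after $M$ but before $\eps_0$. The first step is to record that $T := \left(-\overline{P}(K_{00}+1)\overline{P}\right)^{-1}\overline{P}_0$ is a bounded operator on $\{v,w_0\}^\perp$, with norm bound some constant $C_0$ independent of $\eps$ and $\tilde\alpha$; this is exactly~\eqref{invert} together with the boundedness of $\overline{P}_0$. Thus $\mathcal{G} = T \circ \mathcal{F}$, and everything reduces to estimating $\mathcal{F}(W;\eps,\tilde\alpha,\beta)$ and its increments in $W$.

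Next I would estimate $\|\mathcal{F}(W;\eps,\tilde\alpha,\beta)\|_{L^2}$ for $W \in B_R$, using the stated uniform $L^2$-boundedness of all the operators appearing (Lemma~\ref{bdd} for $K_{00}$, $K_{10}$, and the uniform bounds on $\overline{K}_1$, $\overline{K}_2$, $K_{R3}$, $K_{R4}$, and $f_1,f_2,h_1$). The key point is a bookkeeping one: every term in the expression for $\mathcal{F}$ carries a prefactor that is $O(|\eps|^3)$ under the hypotheses $|\tilde\alpha| \leq M|\eps|^3$, $|\beta| \leq M|\eps|^3$, $\|W\|_{L^2} \leq R|\eps|^3$. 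Indeed $\beta \overline{P}K_{00}v$ and $\tilde\alpha\,\overline{P}K_{10}w_0$ are $O(M|\eps|^3)$; the term $\overline{P}(\eps\overline{K}_1 + \tilde\alpha\overline{K}_2)(\beta v + W)$ is $O((|\eps| + M|\eps|^3)(M|\eps|^3 + R|\eps|^3)) = O((M+R)|\eps|^4)$; the $\eps^3 f_1$ term is $O(|\eps|^3)$; the $\eps\tilde\alpha f_2$ and $\tilde\alpha^2 h_1$ terms are $O(M|\eps|^4)$ and $O(M^2|\eps|^6)$; and the $(\alpha_2\eps^2+\tilde\alpha)$-weighted tails involving $K_{10}$, $K_{R3}$, $K_{R4}$ acting on $\eps w_1 + \eps^2 w_2$ or $w_0 + \eps w_1 + \eps^2 w_2$ are each $O(|\eps|^3)$ or smaller. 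Collecting these, $\|\mathcal{F}\|_{L^2} \leq C_1 |\eps|^3 + C_2(M,R)|\eps|^4$ for constants $C_1$ (absolute) and $C_2$ (depending on $M,R$). Hence $\|\mathcal{G}(W)\|_{L^2} \leq C_0 C_1 |\eps|^3 + C_0 C_2|\eps|^4$; choosing $R := 2C_0 C_1$ and then $\eps_0$ small enough that $C_0 C_2(M,R)|\eps|^4 \leq C_0 C_1 |\eps|^3$ on $|\eps| \leq \eps_0$ gives $\|\mathcal{G}(W)\|_{L^2} \leq R|\eps|^3$, so $\mathcal{G}$ maps $B_R$ into itself. (One also checks $\mathcal{G}(W) \in \{v,w_0\}^\perp$, which is automatic from the range of $T$.)

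For the contraction estimate, note that $W \mapsto \mathcal{F}(W;\eps,\tilde\alpha,\beta)$ is affine in $W$: the only $W$-dependence is through the single term $\overline{P}(\eps\overline{K}_1 + \tilde\alpha\overline{K}_2)W$. Therefore
\begin{align*}
\|\mathcal{G}(W) - \mathcal{G}(W')\|_{L^2}
&\leq C_0 \|\overline{P}(\eps\overline{K}_1 + \tilde\alpha\overline{K}_2)(W - W')\|_{L^2} \\
&\leq C_0\bigl(|\eps|\,\|\overline{K}_1\| + |\tilde\alpha|\,\|\overline{K}_2\|\bigr)\|W - W'\|_{L^2}
\leq C_3(|\eps| + M|\eps|^3)\|W - W'\|_{L^2},
\end{align*}
which is $\leq \tfrac12 \|W - W'\|_{L^2}$ once $\eps_0$ is further shrunk. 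The Banach fixed point theorem then yields a unique $W \in B_R$ solving~\eqref{defW}, and uniqueness within the prescribed ball is exactly the claimed statement. The main obstacle, and the only genuinely substantive part, is the term-by-term verification that every summand in $\mathcal{F}$ is $O(|\eps|^3)$ — equivalently, confirming that the definitions of $w_0,w_1,w_2,\alpha_2$ and the relations~\eqref{Pw0}--\eqref{O2} have correctly absorbed all the $O(1)$, $O(\eps)$, and $O(\eps^2)$ contributions, so that $\overline{P}_0\mathcal{F}$ (and indeed $\mathcal{F}$ itself, after the cancellations producing it) starts at third order; once that accounting is in place, the contraction argument is routine.
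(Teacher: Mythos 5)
Your proposal is correct and follows essentially the same route as the paper: a Banach fixed point argument on the ball of radius $R|\eps|^3$ in $\{v,w_0\}^{\perp}$, using the uniform boundedness of $\left(-\overline{P}(K_{00}+1)\overline{P}\right)^{-1}\overline{P}_0$, the term-by-term $O(|\eps|^3)$ bookkeeping for $\mathcal{F}$, and the affine dependence on $W$ through $\overline{P}(\eps\overline{K}_1+\tilde{\alpha}\overline{K}_2)W$ for the contraction. The only cosmetic slip is calling $C_1$ absolute when it in fact depends on $M$ (via the $\beta$ and $\tilde{\alpha}$ terms), which is harmless since the lemma allows $R$ to depend on $M$.
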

\begin{proof}
We prove this by means of Banach Fixed Point Theorem. We must show that $\mathcal{G}(W)$ maps the closed ball of radius $R |\eps|^3$ into itself and that $\mathcal{G}(W)$ is a contraction mapping. Taking $W \in L^2$ orthogonal to $v$ and $w_0$ such that $\|W\|_{L^2} \leq R |\eps|^3$ and given $M> 0$ where $|\tilde{\alpha}|\leq M |\eps|^3$ and $|\beta|\leq M |\eps|^3$, we have,
using the boundedness of 
$\left(-\overline{P}(K_{00}+1)\overline{P}\right)^{-1}  \overline{P}_0$,
\begin{align*}
\|\mathcal{G}&\|_{L^2}   \\
&\leq C |\beta|  \|\overline{P}K_{00}v + \overline{P} \left( \eps \overline{K}_1 + \tilde{\alpha} \overline{K}_2 \right)v \|_{L^2}+ C |\tilde{\alpha}|  \|\overline{P}\left( K_{10}w_0 + \eps f_2 + \tilde{\alpha} h_1 \right)\|_{L^2}  \\
& \quad + C \|\overline{P}  \left( \eps \overline{K}_1 + \tilde{\alpha} \overline{K}_2 \right)W\|_{L^2} 
+ |\eps|^3 C \| \overline{P}f_1\|_{L^2} \\
& \leq  C M|\eps|^3  + C M |\eps|^3  + C |\eps|    \|W\|_{L^2} + C |\tilde{\alpha}|   \|W\|_{L^2}+ C |\eps|^3  \\
&  \leq C |\eps|^3  +  C R |\eps|^4  \\
&\leq R |\eps|^3
\end{align*}
for some appropriately chosen $R$ with $|\eps|$ small enough. Here $C$ is a positive, finite constant whose value changes at each appearance. 
Next consider 
\begin{align*}
\| \mathcal{G}(W_1&)-\mathcal{G}(W_2) \|_{L^2}  \\
& \leq  C  \| \overline{P} \left( \eps \overline{K}_1 + \tilde{\alpha} \overline{K}_2 \right)\|_{L^2\rightarrow L^2}   \|W_1-W_2\|_{L^2}\\
& \leq C |\eps|\|\overline{P} \  \overline{K}_{1} \|_{L^2\rightarrow L^2}  \|W_1-W_2\|_{L^2} + C |\tilde{\alpha}|  \| \overline{P} \  \overline{K}_{2} \|_{L^2\rightarrow L^2}  \|W_1-W_2\|_{L^2}\\
&\leq C |\eps|  \|W_1-W_2\|_{L^2} \leq \kappa \|W_1-W_2\|_{L^2}
\end{align*}
with $0<\kappa< 1$ by taking $|\eps|$ sufficiently small. 
Hence $\mathcal{G}(W)$ is a contraction, and we obtain the desired result. 
\end{proof}

Lemma~\ref{W} provides $W$ as a function of $\tilde{\alpha}$ and $\beta$,
which we may then substitute into~\eqref{P0} to get
\begin{align}\label{alphabeta1}
0&=\langle w_0,\mathcal{F}\rangle  \nonumber \\ 
&= \beta\langle w_0,K_{00}v\rangle  + \tilde{\alpha} \langle w_0,K_{10}w_0\rangle + \eps \beta \langle w_0,\overline{K}_1 v\rangle  + \tilde{\alpha} \beta \langle w_0,\overline{K}_2 v\rangle    \nonumber \\
&  \quad+ \eps^3\langle w_0,f_1\rangle  +\eps \tilde{\alpha}\langle w_0,f_2\rangle  
+\tilde{\alpha}^2\langle w_0, h_1 \rangle  + \eps \langle w_0,\overline{K}_1 W\rangle  +  \tilde{\alpha}\langle w_0,\overline{K}_2W\rangle  \nonumber  \\
&=:\beta\langle w_0,K_{00}v\rangle + \tilde{\alpha} \langle w_0,K_{10}w_0\rangle  + \mathcal{F}_1
\end{align}
which is the first of two equations relating $\tilde{\alpha}$ and $\beta$. 

The second equation is the complementary one to~\eqref{Pbar}: substitute (\ref{expansion}) to (\ref{unexp}) but this time multiply by $\alpha$ and take projection $P$ to see
\begin{equation} \label{P}
\begin{split}
0 &= \alpha P(K_{\alpha,\eps} + 1) w \\
&= K_{-10}w_0 + \eps(K_{-11}w_0+K_{-10}w_1) \\
&\quad + \eps^2\left(K_{-10}w_2+K_{-11}w_1+K_{-12}w_0\right) 
+\eps^2\alpha_2 (K_{00}+1)w_0 \\
&\quad  + \eps^3(K_{-11}w_2 + K_{-12}w_1 + K_{R1}w_0  + \eps K_{-12}w_2 + \eps K_{R1}w_1 + \eps^2K_{R1}w_2) \\
& \quad + \beta K_{-10} v + K_{-10}W + \eps (K_{-11} + \eps K_{-12} + \eps^2 K_{R1})(\beta v+W) \\ 
& \quad + \tilde{\alpha} (K_{00}+1) w_0 + \eps^3 \alpha_2 P(K_{00}+1)(w_1 + \eps w_2) +\eps \tilde{\alpha}P(K_{00}+1)(w_1+\eps w_2)\\	
& \quad  + \eps^2 \alpha_2 P(K_{00}+1)(\beta v +W) +  \tilde{\alpha} P(K_{00}+1)(\beta v +W)\\
& \quad + \alpha P(\eps K_{01} + \eps^2 K_{02} + \eps^3 K_{R2}+\alpha K_{10} + \alpha \eps K_{R3} + \alpha^2 K_{R4})  \\
& \quad \quad \quad \quad \quad \quad \quad \quad \quad \quad \quad \quad \quad \quad \quad \quad \quad \quad
 \times (w_0+\eps w_1 + \eps^2 w_2 + \beta v + W).
\end{split}
\end{equation}
After using known information about $w_0, w_1, w_2, \alpha_2$ coming from (\ref{Pw0}), (\ref{Pw1}), (\ref{Pw2}) and noting that $K_{-10}W=-4 P W=0$ from (\ref{genproj}) we have
\begin{align*}
0&=  \beta K_{-10} v + \tilde{\alpha} (K_{00}+1) w_0 \\
&\quad + \eps^3(K_{-11}w_2 + K_{-12}w_1 + K_{R1}w_0  + \eps K_{-12}w_2 + \eps K_{R1}w_1 + \eps^2K_{R1}w_2) \\
& \quad   + \eps (K_{-11} + \eps K_{-12} + \eps^2 K_{R1})(\beta v+W) \\ 
& \quad  + \eps^3 \alpha_2 P(K_{00}+1)(w_1 + \eps w_2) +\eps \tilde{\alpha}P(K_{00}+1)(w_1+\eps w_2)\\	
& \quad  + \eps^2 \alpha_2 P(K_{00}+1)(\beta v +W) +  \tilde{\alpha} P(K_{00}+1)(\beta v +W)\\
& \quad + \alpha P(\eps K_{01} + \eps^2 K_{02} + \eps^3 K_{R2}+\alpha K_{10} + \alpha \eps K_{R3} + \alpha^2 K_{R4})\\
& \quad \quad \quad \quad \quad \quad \quad \quad \quad \quad \quad \quad \quad \quad \quad \quad \quad \quad
 \times (w_0+\eps w_1 + \eps^2 w_2 + \beta v + W).
\end{align*}
Written more compactly, this is 
\begin{align*}
0= &\beta K_{-10}v + \tilde{\alpha}(K_{00}+1) w_0 \\
&\quad + \eps^3 f_4 + \eps \overline{K}_3 (\beta v + W) + \tilde{\alpha} \eps f_5 + \tilde{\alpha} \overline{K}_4 (\beta v +W)+ \tilde{\alpha}^2 h_2
\end{align*}
where $\overline{K}_3$ is a bounded (uniformly in $\eps$) operator containing $\eps$ but not $\tilde{\alpha}$, while $\overline{K}_4$ is a bounded 
(uniformly in $\eps$ and $\tilde{\alpha}$)
operator containing both $\eps$ and $\tilde{\alpha}$. 
Functions $f_4$ and $f_5$ depend on $\eps$ (and are uniformly $L^2$-bounded) but not $\tilde{\alpha}$, while the function 
 $h_2$ depends on both $\eps$ and $\tilde{\alpha}$
 (and is uniformly $L^2$-bounded).
To make the relationship between $\tilde{\alpha}$ and $\beta$ more explicit we take inner product with $v$
\begin{align}\label{alphabeta2}
0&=\beta \langle v, K_{-10}v\rangle + \tilde{\alpha} \langle v, (K_{00}+1) w_0 \rangle + \eps^3 \langle v, f_4 \rangle \nonumber \\
&\quad  + \eps  \langle v, \overline{K}_3 (\beta v + W)\rangle + \tilde{\alpha} \eps \langle v, f_5\rangle + \tilde{\alpha} \langle v, \overline{K}_4 (\beta v +W)\rangle + \tilde{\alpha}^2  \langle v, h_2 \rangle \nonumber \\
&=:\beta \langle v, K_{-10}v\rangle + \tilde{\alpha} \langle v, (K_{00}+1) w_0 \rangle + \mathcal{F}_2. 
\end{align}
Now let 
\begin{align*}
\vec{\zeta}= \begin{pmatrix} \tilde{\alpha} \\ \beta \end{pmatrix} 
\end{align*}
and rewrite (\ref{alphabeta1}) and (\ref{alphabeta2}) in the following way
\begin{align*}
A \vec{\zeta} := \begin{pmatrix} \langle w_0,K_{10}w_0\rangle & \langle w_0,K_{00}v\rangle  \\  \langle v,(K_{00}+1)w_0\rangle  &    \langle v,K_{-10}v\rangle\end{pmatrix} 
\begin{pmatrix} \tilde{\alpha} \\ \beta \end{pmatrix}  = 
\begin{pmatrix} \mathcal{F}_1 \\ \mathcal{F}_2 \end{pmatrix} 
\end{align*}
which we recast as a fixed point problem 
\begin{align}\label{fixedpoint}
\vec{\zeta}=A^{-1} \begin{pmatrix} \mathcal{F}_1 \\ \mathcal{F}_2 \end{pmatrix} =:\vec{F}(\tilde{\alpha},\beta;\eps).
\end{align}
We have computed 
\begin{align*}
A =\begin{pmatrix} 0& 16\\  16&   -32 \end{pmatrix} 
\end{align*}
so in particular, $A$ is invertible. 
We wish to show there is a solution $(\tilde{\alpha},\beta)$ of (\ref{fixedpoint}) of the appropriate size. We establish this fact in the following Lemmas. Lemmas \ref{Kcont} and \ref{W(x)} are accessory to Lemma \ref{alphabeta}.

\begin{lemma}\label{Kcont}
The operators and functions $\overline{K}_2$, $\overline{K}_4$ and $h_1$, $h_2$ are continuous in $\tilde{\alpha}>0$. 
\end{lemma}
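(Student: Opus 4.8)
The plan is to trace every occurrence of $\tilde{\alpha}$ in $\overline{K}_2$, $\overline{K}_4$, $h_1$, $h_2$ back to one of two harmless sources, and then assemble the continuity from the elementary rules for sums and products. First I would recall where these objects come from: $\overline{K}_2$ is the coefficient of $\tilde{\alpha}$ in the operator multiplying $(\beta v+W)$ in~\eqref{Pbar} once the singular $\frac{1}{\alpha}$-part of $K_{\alpha,\eps}$ and the term $K_{00}+1$ have been separated off, $\overline{K}_4$ is the analogous coefficient in the compact form of~\eqref{P} obtained after multiplying through by $\alpha$ and using \eqref{Pw0}--\eqref{Pw2}, and $h_1$, $h_2$ are the $\tilde{\alpha}^2$-coefficient $L^2$-functions appearing in the decomposition of $\mathcal{F}$ and of~\eqref{P}. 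The key structural point is that in each case the $\frac{1}{\alpha}$-terms of the expansion~\eqref{Kexp} have already been disposed of before these objects are formed: in~\eqref{Pbar} they lie in $\mathrm{Ran}(P)$ (all $R_{-1}$-terms output multiples of $v$, by~\eqref{genproj} and the remark after it) and are annihilated by $\overline{P}$, while in~\eqref{P} they have been multiplied by $\alpha$. Hence, after substituting $\alpha=\alpha_2\eps^2+\tilde{\alpha}$, the dependence of $\overline{K}_2,\overline{K}_4,h_1,h_2$ on $\tilde{\alpha}$ is \emph{polynomial} (coming only from the surviving powers $\alpha,\alpha^2,\alpha^3$), not rational.

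Next I would isolate the one genuinely $\alpha$-dependent operator. Of all the operators in~\eqref{Kexp}, only $K_{R4}=|V_0|^{1/2}R_R V^{(p)}|V_0|^{-1/2}$ depends on $\alpha$: the kernels of $R_{-1}$, $R_0$, $R_1$ displayed after~\eqref{resolvexp} are $\alpha$-independent, so $K_{-1j}$, $K_{0j}$, $K_{10}$, $K_{R3}$ do not see $\alpha$ at all. Reading off the definitions then shows that each of $\overline{K}_2$ and $\overline{K}_4$ is a \emph{finite} sum of terms $c(\eps,\tilde{\alpha})\,K$, where $c$ is a polynomial in $\tilde{\alpha}$ with $\eps$-dependent coefficients and $K$ is either one of the $\tilde{\alpha}$-independent operators above (possibly pre- or post-composed with $P$, $\overline{P}$, or $K_{00}+1$) or the operator $K_{R4}$ evaluated at $\alpha=\alpha_2\eps^2+\tilde{\alpha}$; likewise $h_1$ and $h_2$ are finite sums $c(\eps,\tilde{\alpha})\,g$ with $g$ one of these operators applied to a fixed vector among $w_0,w_1,w_2,v$, or $K_{R4}$ so applied. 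In particular none of $\overline{K}_2,\overline{K}_4,h_1,h_2$ depends on $W$ or $\beta$ (that dependence was made fully explicit in~\eqref{Pbar} and~\eqref{P}), so no appeal to the solution map $W=W(\tilde{\alpha},\beta)$ of Lemma~\ref{W} is needed.

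It then remains to do the continuity bookkeeping. The affine map $\tilde{\alpha}\mapsto\alpha=\alpha_2\eps^2+\tilde{\alpha}$ is continuous, and for $|\eps|$ small and $\tilde{\alpha}$ in the relevant range it takes values in a compact subinterval of $(0,\infty)$, so the behaviour of $R_R$ at $\alpha=0$ plays no role; the map $\alpha\mapsto K_{R4}$ is continuous in the operator norm on $L^2(\R,\C^2)$ by the statement recorded immediately after~\eqref{Kexp}; hence $\tilde{\alpha}\mapsto K_{R4}(\alpha_2\eps^2+\tilde{\alpha})$ is norm-continuous. Since each $c(\eps,\cdot)$ is a polynomial, and finite sums and products of operator-norm-continuous maps (respectively of scalar continuous maps paired with an $L^2$-valued continuous map) are continuous, it follows that $\overline{K}_2$ and $\overline{K}_4$ are norm-continuous in $\tilde{\alpha}$ and that $h_1$, $h_2$ are continuous as $L^2(\R,\C^2)$-valued functions of $\tilde{\alpha}$. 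I expect the only step needing real care to be the middle one --- the exact accounting of which terms constitute $\overline{K}_2,\overline{K}_4,h_1,h_2$ and the check that their $\tilde{\alpha}$-dependence is confined to polynomial prefactors and to $K_{R4}$ --- which is a direct if mildly tedious reading of~\eqref{Pbar}, \eqref{P}, and the displays that define these quantities; the genuine analytic input, continuity of $R_R$ (equivalently $K_{R4}$) in $\alpha$, is already in hand and does the rest for free.
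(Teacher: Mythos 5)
Your proof is correct and follows essentially the same route as the paper, whose entire argument is the one-line observation that these objects are compositions of continuous functions of $\tilde{\alpha}$; you simply make explicit that the only genuine $\tilde{\alpha}$-dependence enters through $K_{R4}$ evaluated at $\alpha=\alpha_2\eps^2+\tilde{\alpha}$ (continuous in $\alpha>0$ by the statement after \eqref{Kexp}) together with scalar prefactors. One small caveat: whether those prefactors come out polynomial or rational in $\tilde{\alpha}$ depends on how the implicit splitting defining $\overline{K}_1$, $\overline{K}_2$ (and $f_1$, $f_2$, $h_1$) is pinned down, but since any such pole sits at $\tilde{\alpha}=0$ this does not affect continuity on $\tilde{\alpha}>0$, which is all the lemma asserts.
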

\begin{proof}
The operators and function in question are compositions of continuous functions of $\tilde{\alpha}$. 
\end{proof}
\begin{lemma}\label{W(x)}
The $W$ given by Lemma \ref{W} is continuous in $\vec{\zeta}$ for sufficiently small $|\eps|$.
\end{lemma}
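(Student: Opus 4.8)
The plan is to deduce continuity of $W$ in $\vec\zeta=(\tilde\alpha,\beta)$ from the fixed-point characterization \eqref{defW} together with the uniform contraction estimate already established in Lemma~\ref{W}. Recall that $W=W(\vec\zeta)$ is obtained as the unique fixed point of $\mathcal{G}(\,\cdot\,;\eps,\tilde\alpha,\beta)$ in the ball of radius $R|\eps|^3$, and that in the proof of Lemma~\ref{W} the contraction constant $\kappa<1$ was shown to be \emph{uniform} in $\tilde\alpha$ and $\beta$ (it depended only on $|\eps|$ being small). The standard fact is that a uniformly-contracting family of maps depending continuously on a parameter has fixed points depending continuously on that parameter; so the whole task reduces to checking that $\vec\zeta\mapsto\mathcal{G}(W;\eps,\vec\zeta)$ is continuous, for fixed $W$, uniformly for $W$ in the relevant ball.

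First I would fix $\eps\neq 0$ small (so that Lemma~\ref{W} applies), and recall from \eqref{defW} that $\mathcal{G}(W;\eps,\vec\zeta)=\left(-\overline{P}(K_{00}+1)\overline{P}\right)^{-1}\overline{P}_0\,\mathcal{F}(W;\eps,\tilde\alpha,\beta)$, where the prefactor is a fixed bounded operator independent of $\vec\zeta$. Thus it suffices to show $\vec\zeta\mapsto\mathcal{F}(W;\eps,\tilde\alpha,\beta)$ is continuous in $L^2$, uniformly for $\|W\|_{L^2}\le R|\eps|^3$. Inspecting the definition of $\mathcal{F}$, its dependence on $\vec\zeta$ enters only through: the explicit scalar multipliers $\beta$ and $\tilde\alpha$ (and $\tilde\alpha^2$); the operators $\overline{K}_1$ (which depends on $\eps$ but not $\tilde\alpha$), $\overline{K}_2$, and the function $h_1$; and the term $\overline{P}\left(\eps\overline{K}_1+\tilde\alpha\overline{K}_2\right)(\beta v+W)$. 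By Lemma~\ref{Kcont}, $\overline{K}_2$ and $h_1$ are continuous in $\tilde\alpha$, and all the remaining $\vec\zeta$-dependence is polynomial in $(\tilde\alpha,\beta)$ with $L^2$-bounded coefficients; hence $\mathcal{F}$ is a (locally uniformly in $W$) continuous function of $\vec\zeta$ into $L^2$.

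With that in hand, I would run the standard perturbation-of-fixed-points argument: for $\vec\zeta,\vec\zeta'$ in the allowed region,
\begin{align*}
\|W(\vec\zeta)-W(\vec\zeta')\|_{L^2}
&=\|\mathcal{G}(W(\vec\zeta);\eps,\vec\zeta)-\mathcal{G}(W(\vec\zeta');\eps,\vec\zeta')\|_{L^2}\\
&\le \|\mathcal{G}(W(\vec\zeta);\eps,\vec\zeta)-\mathcal{G}(W(\vec\zeta');\eps,\vec\zeta)\|_{L^2}
+\|\mathcal{G}(W(\vec\zeta');\eps,\vec\zeta)-\mathcal{G}(W(\vec\zeta');\eps,\vec\zeta')\|_{L^2}\\
&\le \kappa\,\|W(\vec\zeta)-W(\vec\zeta')\|_{L^2}
+\|\mathcal{G}(W(\vec\zeta');\eps,\vec\zeta)-\mathcal{G}(W(\vec\zeta');\eps,\vec\zeta')\|_{L^2},
\end{align*}
where the first bound on the last line uses the contraction estimate from Lemma~\ref{W} (at the fixed parameter value $\vec\zeta$) and the second uses the continuity of $\mathcal{G}$ in $\vec\zeta$ just established. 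Rearranging gives
\begin{align*}
\|W(\vec\zeta)-W(\vec\zeta')\|_{L^2}\le \frac{1}{1-\kappa}\,\|\mathcal{G}(W(\vec\zeta');\eps,\vec\zeta)-\mathcal{G}(W(\vec\zeta');\eps,\vec\zeta')\|_{L^2}\longrightarrow 0
\end{align*}
as $\vec\zeta'\to\vec\zeta$, since $\kappa<1$ is independent of the parameters and $W(\vec\zeta')$ stays in the fixed ball of radius $R|\eps|^3$ on which the continuity of $\mathcal{G}$ in $\vec\zeta$ is uniform. This proves $W$ is continuous in $\vec\zeta$. The only mild subtlety — and the one place I would be careful — is making sure the continuity of $\mathcal{F}$ (hence $\mathcal{G}$) in $\vec\zeta$ is uniform over the ball $\|W\|_{L^2}\le R|\eps|^3$, rather than merely pointwise in $W$; but since the $W$-dependence of $\mathcal{F}$ appears only linearly through $\overline{P}\left(\eps\overline{K}_1+\tilde\alpha\overline{K}_2\right)W$, and $\overline{K}_1,\overline{K}_2$ are uniformly bounded operators, this uniformity is immediate.
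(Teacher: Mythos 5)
Your proposal is correct and is essentially the paper's own argument: the paper likewise bounds $\|W_1-W_2\|_{L^2}$ by $C\|\mathcal{F}(W_1,\vec{\zeta}_1)-\mathcal{F}(W_2,\vec{\zeta}_2)\|_{L^2}$, controls the parameter-difference terms via Lemma~\ref{Kcont} and the explicit polynomial dependence, and absorbs the $W$-difference terms (of size $C|\eps|\|W_1-W_2\|_{L^2}$) into the left-hand side — which is exactly your uniform-contraction rearrangement with $\kappa<1$. The packaging as a standard continuity-of-fixed-points lemma versus the paper's direct estimate is only cosmetic.
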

\begin{proof}
Let $(\tilde{\alpha}_1,\beta_1)$ give rise to $W_1$ and let $(\tilde{\alpha}_2,\beta_2)$ give rise to $W_2$ via Lemma \ref{W}. Take $|\tilde{\alpha}_1-\tilde{\alpha}_2| <  \delta$ and $|\beta_1-\beta_2|< \delta$. We show that $\|W_1-W_2\|_{L^2} <  C \delta$ for some constant $C> 0$. Observing $\overline{K}_2$ depends on $\tilde{\alpha}$, we see
\begin{align*}
\|&W_1-W_2\|_{L^2}= \|\left(\overline{P}(K_{00}+1)\overline{P} \right)^{-1} \overline{P}_0\|_{L^2 \rightarrow L^2}\| \mathcal{F}(W_1,\vec{\zeta}_1;\eps)-\mathcal{F}(W_2,\vec{\zeta}_2;\eps) \|_{L^2}\\
& \leq C \bigg\|
(\beta_1-\beta_2)K_{00}v + (\tilde{\alpha}_1-\tilde{\alpha}_2) K_{10}w_0 + \eps(\beta_1-\beta_2)\overline{K}_1 v  
  \\
&\quad \quad \quad+\eps \overline{K}_1(W_1-W_2) + \tilde{\alpha}_1 \beta_1 \overline{K}_2(\tilde{\alpha}_1) v  
- \tilde{\alpha}_2 \beta_2 \overline{K}_2(\tilde{\alpha}_2) v
 + \tilde{\alpha}_1 \overline{K}_2(\tilde{\alpha}_1) W_1 
\\
& \quad \quad \quad \quad \quad \quad \quad \ 
- \tilde{\alpha}_2 \overline{K}_2(\tilde{\alpha}_2) W_2
 + \eps(\tilde{\alpha}_1-\tilde{\alpha}_2)f_2 + \tilde{\alpha}_1^2 h_1(\tilde{\alpha}_1) -\tilde{\alpha}_2^2 h_1(\tilde{\alpha}_2)
 \bigg\|_{L^2} \\
& \leq C \delta + C |\eps| \|W_1-W_2\|_{L^2} \\
&\quad \quad + \|\tilde{\alpha}_1 \overline{K}_2(\tilde{\alpha}_1) (W_1-W_2) +\left( \tilde{\alpha}_1 \overline{K}_2(\tilde{\alpha}_1) - \tilde{\alpha}_2 \overline{K}_2(\tilde{\alpha}_2) \right) W_2\|_{L^2} \\
& \leq C \delta + C|\eps| \|W_1-W_2\|_{L^2} 
\end{align*}
noting that $|\tilde{\alpha}_1|\leq M |\eps|^3$. Rearranging the above gives 
\begin{align*}
\|W_1-W_2\|_{L^2} <  C \delta 
\end{align*}
for small enough $|\eps|$. 
\end{proof}

\begin{lemma}\label{alphabeta}
There exists $\eps_0> 0$ such that for all $-\eps_0\leq \eps \leq \eps_0$ \emph{(}$\eps \neq 0$\emph{)} the equation \emph{(\ref{fixedpoint})} has a fixed point with 
$ |\tilde{\alpha}|, |\beta| \leq M|\eps|^3$ for some $M > 0$. 
\end{lemma}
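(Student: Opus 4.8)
The plan is to prove Lemma~\ref{alphabeta} by applying the Banach Fixed Point Theorem to the map $\vec{F}(\tilde\alpha,\beta;\eps)$ defined in~\eqref{fixedpoint}, on the closed ball $B_M := \{ (\tilde\alpha,\beta) : |\tilde\alpha| \le M|\eps|^3, \ |\beta| \le M|\eps|^3 \}$ for a suitably chosen $M$ and $|\eps|$ small. The structure mirrors the proof of Lemma~\ref{W}: first I would show $\vec{F}$ maps $B_M$ into itself, then that it is a contraction on $B_M$. Because $A$ is invertible (we computed $A = \left(\begin{smallmatrix} 0 & 16 \\ 16 & -32 \end{smallmatrix}\right)$), $\|A^{-1}\|$ is a fixed finite constant, so it suffices to estimate $\mathcal{F}_1$ and $\mathcal{F}_2$.

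First I would record the size of $\mathcal F_1$ and $\mathcal F_2$ on $B_M$. Inspecting~\eqref{alphabeta1}, every term of $\mathcal{F}_1$ carries a factor of at least $|\eps|^3$: the terms $\eps\beta\langle w_0,\overline K_1 v\rangle$, $\tilde\alpha\beta\langle w_0,\overline K_2 v\rangle$, $\eps\tilde\alpha\langle w_0, f_2\rangle$, $\tilde\alpha^2\langle w_0,h_1\rangle$, $\eps\langle w_0,\overline K_1 W\rangle$, $\tilde\alpha\langle w_0,\overline K_2 W\rangle$ are each $O(|\eps| \cdot M|\eps|^3)$ or smaller once we feed in $|\beta|,|\tilde\alpha|\le M|\eps|^3$ and the bound $\|W\|_{L^2}\le R|\eps|^3$ from Lemma~\ref{W} (with $R$ depending on $M$), while $\eps^3\langle w_0,f_1\rangle = O(|\eps|^3)$. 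Hence $|\mathcal F_1| \le C_1|\eps|^3$ with $C_1$ independent of $M$ for $|\eps|$ small (the $M$-dependent contributions come with an extra $|\eps|$). The same inspection of~\eqref{alphabeta2} gives $|\mathcal F_2| \le C_2|\eps|^3$. Therefore $\|\vec F\| \le \|A^{-1}\| (C_1 + C_2)|\eps|^3 =: C_0 |\eps|^3$, and choosing $M := C_0$ (then possibly shrinking $\eps_0$) gives $\vec F(B_M) \subseteq B_M$.

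Next, the contraction estimate. Given $\vec\zeta_1,\vec\zeta_2 \in B_M$ with associated $W_1, W_2$ from Lemma~\ref{W}, I would subtract the two expressions for $\mathcal F_1$ (resp. $\mathcal F_2$) and bound $|\mathcal F_i(\vec\zeta_1) - \mathcal F_i(\vec\zeta_2)|$. Every term is either (i) linear in $(\tilde\alpha,\beta)$ with a coefficient that is $O(|\eps|)$ — e.g. $\eps\beta\langle w_0,\overline K_1 v\rangle$ — hence contributes $O(|\eps|)\|\vec\zeta_1-\vec\zeta_2\|$; (ii) bilinear or quadratic in the small quantities, hence by the product rule contributes $O(M|\eps|^3)\|\vec\zeta_1-\vec\zeta_2\|$ plus terms controlled by $\|W_1 - W_2\|_{L^2}$; or (iii) involves $W$ directly, handled via $\|W_1-W_2\|_{L^2} \le C\|\vec\zeta_1-\vec\zeta_2\|$ from Lemma~\ref{W(x)} together with the continuity of $\overline K_2,\overline K_4,h_1,h_2$ in $\tilde\alpha$ from Lemma~\ref{Kcont}. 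Collecting, $\|\vec F(\vec\zeta_1) - \vec F(\vec\zeta_2)\| \le \|A^{-1}\| \cdot C|\eps| \cdot \|\vec\zeta_1-\vec\zeta_2\| \le \kappa \|\vec\zeta_1-\vec\zeta_2\|$ with $\kappa<1$ for $|\eps|$ small. Banach then yields the unique fixed point in $B_M$, completing the proof.

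The main obstacle I anticipate is purely bookkeeping rather than conceptual: one must be careful that the constant $C_1$ (and hence the eventual choice of $M$) does not secretly depend on $M$ through the bound $R = R(M)$ on $\|W\|_{L^2}$ supplied by Lemma~\ref{W}. The resolution is that every place $W$ enters $\mathcal F_1$ or $\mathcal F_2$ it is multiplied by an extra factor of $\eps$ or $\tilde\alpha$ (see the terms $\eps\langle w_0,\overline K_1 W\rangle$, $\tilde\alpha\langle w_0,\overline K_2 W\rangle$ in~\eqref{alphabeta1} and the $\overline K_3(\beta v + W)$, $\overline K_4(\beta v + W)$ terms in~\eqref{alphabeta2}), so the $R(M)|\eps|^3$ contribution always appears as $R(M)|\eps|^4$, which is absorbed for $|\eps|$ small after $M$ is fixed. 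One should also verify that Lemma~\ref{W} may legitimately be invoked: once $M$ is chosen, apply Lemma~\ref{W} with that $M$ to obtain the corresponding $\eps_0$ and $R$, and then shrink $\eps_0$ further as needed for the self-mapping and contraction bounds above.
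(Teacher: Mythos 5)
Your self-mapping estimates are essentially the paper's: the paper also bounds $\|A^{-1}\|\,|\mathcal F_1|$ and $\|A^{-1}\|\,|\mathcal F_2|$ by $C|\eps|^3+CM|\eps|^4\le M|\eps|^3$, with the same observation that the $M$- and $R(M)$-dependent terms carry an extra factor of $\eps$. The difference is in the fixed point theorem: the paper deliberately uses \emph{Brouwer}, proving only that $\vec F$ maps the closed square into itself and is \emph{continuous} in $\vec\zeta$ (via Lemmas \ref{Kcont} and \ref{W(x)}), which yields existence without any Lipschitz information. You instead invoke Banach and claim a contraction with constant $O(|\eps|)$, and this is where there is a genuine gap: the contraction step needs a \emph{Lipschitz} bound in $\tilde\alpha$ for the $\tilde\alpha$-dependent objects $\overline K_2$, $\overline K_4$, $h_1$, $h_2$ (they enter through differences such as $\tilde\alpha_1^2\langle w_0,h_1(\tilde\alpha_1)\rangle-\tilde\alpha_2^2\langle w_0,h_1(\tilde\alpha_2)\rangle$ and $\tilde\alpha_1\langle w_0,\overline K_2(\tilde\alpha_1)W_1\rangle-\tilde\alpha_2\langle w_0,\overline K_2(\tilde\alpha_2)W_2\rangle$), but Lemma \ref{Kcont}, which you cite for this purpose, only provides \emph{continuity} in $\tilde\alpha$. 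Mere continuity lets you conclude $|\mathcal F_i(\vec\zeta_1)-\mathcal F_i(\vec\zeta_2)|\to 0$ as $\vec\zeta_1\to\vec\zeta_2$ (enough for Brouwer), but it does not convert $\|h_1(\tilde\alpha_1)-h_1(\tilde\alpha_2)\|_{L^2}$ or $\|\overline K_2(\tilde\alpha_1)-\overline K_2(\tilde\alpha_2)\|_{L^2\to L^2}$ into $C\,|\tilde\alpha_1-\tilde\alpha_2|$, which is what the inequality $\|\vec F(\vec\zeta_1)-\vec F(\vec\zeta_2)\|\le C|\eps|\,\|\vec\zeta_1-\vec\zeta_2\|$ requires. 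The same issue propagates into your use of $\|W_1-W_2\|_{L^2}\le C\|\vec\zeta_1-\vec\zeta_2\|$: that bound again rests on the $\tilde\alpha$-dependence of $\overline K_2$ and $h_1$, for which only continuity is on record.

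To salvage the Banach route you would have to upgrade Lemma \ref{Kcont} to Lipschitz (e.g.\ $C^1$) dependence on $\tilde\alpha$, which amounts to showing that the resolvent remainder $R_R$ in \eqref{resolvexp} is differentiable in $\alpha$ near $\alpha=0$ as an operator between the weighted spaces, so that $K_{R3}$, $K_{R4}$, and hence $\overline K_2,\overline K_4,h_1,h_2$, are Lipschitz in $\tilde\alpha$ with uniform constants. This is plausible (the offending terms also carry tiny prefactors like $M^2|\eps|^6$), but it is an additional analytic step that neither your proposal nor the paper supplies; the paper sidesteps it entirely by settling for continuity and Brouwer, at the cost of not asserting uniqueness of the fixed point. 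As written, your contraction estimate is not justified by the lemmas you cite.
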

\begin{proof}
We prove this by means of the Brouwer Fixed Point Theorem. We show that $\vec{F}$ maps a closed square into itself and that $\vec{F}$ is a continuous function. Take $ |\tilde{\alpha}|, |\beta| \leq M|\eps|^3$ and and so by  Lemma \ref{W} we have $\|W\|_{L^2} \leq  |\eps|^3R$ for some $R > 0$. Consider now
\begin{align*}
&\| A^{-1}\| \left| \mathcal{F}_1 \right| \\
& \leq \|A^{-1}\| 
 \Bigg( |\eps|  |\beta|    |\langle w_0,\overline{K}_1v\rangle | +  |\tilde{\alpha}|  | \beta|  |\langle w_0,
\overline{K}_2v\rangle |  + |\eps|^3 |\langle w_0,f_1\rangle |  \\
& \quad + |\eps| |\tilde{\alpha}|  |\langle w_0,f_2\rangle | + |\tilde{\alpha}|^2 |\langle w_0,h_1\rangle | + |\eps| |\langle w_0,\overline{K}_1 W\rangle | 
+ |\tilde{\alpha}|  |\langle w_0,\overline{K}_2 W\rangle |  \Bigg)\\
&\leq  CM|\eps|^4 + CM^2 |\eps|^6 + C|\eps|^3  + CM|\eps|^4 + CM^2|\eps|^6 + CR|\eps|^4   \\
&\leq C|\eps|^3 + CM|\eps|^4 \leq M|\eps|^3
\end{align*} 
and
\begin{align*}
&\| A^{-1}\| \left| \mathcal{F}_2 \right| \\
&\leq \| A^{-1}\|  
\Bigg( |\eps|^3 |\langle v, f_4 \rangle| + |\eps|  |\langle v, \overline{K}_3 (\beta v + W)\rangle| + |\tilde{\alpha}| |\eps| |\langle v, f_5\rangle|  \\
& \quad \quad \quad \quad \quad \quad \quad \quad \quad \quad \quad \quad + |\tilde{\alpha}| |\langle v, \overline{K}_4 (\beta v +W)\rangle| + |\tilde{\alpha}|^2  |\langle v, h_2\rangle|
\Bigg) \\
&\leq C|\eps|^3 + CM |\eps|^4 +CR |\eps|^4 + CM |\eps|^4 + CM^2 |\eps|^6 + CMR |\eps|^6 + CM^2 |\eps|^6 \\
&\leq C|\eps|^3 + CM|\eps|^4 \leq M|\eps|^3
\end{align*}
for some choice of $M> 0$ and sufficiently small $|\eps|> 0$. Here $C > 0$ is a constant that is different at each instant. So $\vec{F}$ maps the closed square to itself. 

It is left to show that $\vec{F}$ is continuous. Given $\eta> 0$ take $|\tilde{\alpha}_1 - \tilde{\alpha}_2| <  \delta$ and $|\beta_1 - \beta_2| <  \delta$.
Let $(\tilde{\alpha}_1,\beta_1)$ give rise to $W_1$ and let $(\tilde{\alpha}_2,\beta_2)$ give rise to $W_2$ via Lemma \ref{W}. 
We will also use Lemma \ref{Kcont} and Lemma \ref{W(x)}. 
Now consider 
\begin{align*}
&|\mathcal{F}_1(\tilde{\alpha}_1,\beta_1) - \mathcal{F}_1(\tilde{\alpha}_2,\beta_2) | \\
& = \Big| \eps (\beta_1 -\beta_2) \langle w_0,\overline{K}_1v\rangle  + \tilde{\alpha}_1 \beta_1   \langle w_0, \overline{K}_{2}(\tilde{\alpha}_1)v\rangle   -\tilde{\alpha}_2 \beta_2 \langle w_0, \overline{K}_{2}(\tilde{\alpha}_2)v\rangle  
 \\
& \quad \quad + \eps (\tilde{\alpha}_1-\tilde{\alpha}_2) \langle w_0,f_2\rangle 
+\tilde{\alpha}_1^2 \langle w_0, h_1(\tilde{\alpha}_1) \rangle   - \tilde{\alpha}_2^2 \langle w_0, h_1(\tilde{\alpha}_2) \rangle 
 \\
& \quad \quad + \eps \langle w_0,\overline{K}_1(W_1-W_2) \rangle 
+ \tilde{\alpha}_1 \langle w_0,\overline{K}_2(\tilde{\alpha}_1)W_1 \rangle  -\tilde{\alpha}_2 \langle w_0,\overline{K}_2(\tilde{\alpha}_2)W_2 \rangle \Big|  \\
& \leq C \delta + C \| h_1(\tilde{\alpha}_1)-h_1(\tilde{\alpha}_2)\|_{L^2} \\
&\quad \quad + C \|W_1-W_2\|_{L^2} + C\|\overline{K}_2(\tilde{\alpha}_1)-\overline{K}_2(\tilde{\alpha}_2)\|_{L^2 \rightarrow L^2} \\
&\leq C \delta <  \frac{\eta}{ \|A^{-1}\| \sqrt{2}}
\end{align*} 
for small enough $\delta$. Similarly we can show
\begin{align*}
&|\mathcal{F}_2(\tilde{\alpha}_1,\beta_1) - \mathcal{F}_2(\tilde{\alpha}_2,\beta_2) | 
 \leq C \delta <  \frac{\eta}{ \|A^{-1}\| \sqrt{2}}
\end{align*}
for $\delta$ small enough. Putting everything together gives $|\vec{F}(\vec{\zeta}_1)-\vec{F}(\vec{\zeta}_2)| <  \eta$ as required. Hence $\vec{F}$ is continuous. 
\end{proof}
So finally we have solved both~\eqref{Pbar} and~\eqref{P},
and hence~\eqref{unexp}, and so have proved Theorem~\ref{main}.

\section{Comments on the Computations}\label{secnum}
Analytical and numerical computations were used in the above to compute inner products such as the ones appearing in the definition of $\alpha_2$ (\ref{expansion}). It was critical to establish that $\alpha_2>0$ since the expansion of the resolvent  $R^{(\alpha)}$  (\ref{resolvent}) requires $\alpha>0$. Inner products containing $w_0$ and/or $Pw_1$ but not $w_1$ can be written as an explicit single integral and then evaluated analytically or numerically with good accuracy. For example
\begin{align*}
\langle& w_0, K_{02} w_0 \rangle  + \frac{1}{4}\langle w_0,  K_{00} K_{-12}w_0\rangle  \\
=& -\frac{1}{2} \int_{\R^2}|x-y| \big( 4 Q^2(x) - 3Q^4(x)  \big) \\
& \quad \quad \quad \quad \times \big( Q^2(y) q_1(y)-q_1(y) + 3 Q^2(y) q_2(y) -4 q_2(y) -\frac{c_2}{2} Q^2(y) \big) dy dx \\
&  + \frac{1}{2 \sqrt{2}} \int_{\R^2}e^{-\sqrt{2} |x-y|} \big( 2 Q^2(x) - 3Q^4(x)  \big) \\
& \quad \quad \quad \quad  \times \big( Q^2(y) q_1(y)-q_1(y) + 3 Q^2(y) q_2(y) -2 q_2(y) -\frac{c_2}{4} Q^2(y) \big) dy dx \\
 =&-\int_\R Q^2(y) \big( Q^2(y) q_1(y)-q_1(y) + 3 Q^2(y) q_2(y) -4 q_2(y) -\frac{c_2}{2} Q^2(y) \big) dy \\
&  - \int_\R Q^2(y) \big( Q^2(y) q_1(y)-q_1(y) + 3 Q^2(y) q_2(y) -2 q_2(y) -\frac{c_2}{4} Q^2(y) \big) dy
\\
\approx& -2.9369
\end{align*}
where
\begin{align*}
c_2=\frac{1}{2} \int_\R Q^2 q_1 -q_1 + 3Q^2 q_2 -4q_2. 
\end{align*}
To reduce the double integral to a single integral we recall some facts about the integral kernels. Let 
\begin{align*}
h(y)=-\frac{1}{2} \int_\R |x-y| \big(4Q^2(x) -3Q^4(x) \big)dx.
\end{align*}
Then $h$ solves the equation 
\begin{align*}
h''=-4Q^2 +3Q^4.
\end{align*}
Notice that $-4Q^2 +3Q^4 = -2Q^2 u_1 -Q^2 u_2$ where $u_1$ and $u_2$ are the components of the resonance $u_0$ (\ref{res}). Observing the equation (\ref{syst1}) we see that $h=u_1+c=2-Q^2+c$ for some constant $c$. We can directly compute $h(0)=-2$ to find $c=-2$ and so $h=-Q^2$. A similar argument involving (\ref{syst2}) gives 
\begin{align*}
\frac{1}{2\sqrt{2}} \int_\R e^{-\sqrt{2}|x-y|}\big(2Q^2(x) -3Q^4(x) \big)dx = u_2(y) =-Q^2(y).
\end{align*}

Computing inner products containing $\overline{P} w_1$ is harder. We have an explicit expression for $P w_1$ but lack an explicit expression for $\overline{P} w_1$. Therefore we approximate $\overline{P} w_1$ by numerically inverting $\overline{P}(K_{00}+1)\overline{P}$ in 
\begin{align*}
 \overline{P}(K_{00}+1) \overline{P}w_1&=- \left(\frac{1}{4}\overline{P}K_{00}K_{-11} w_0 + \overline{P}K_{01} w_0\right) =:g.
\end{align*} 
Note that $\langle g,v \rangle = \langle g, w_0 \rangle =0$. We represent $\overline{P}(K_{00}+1) \overline{P}$ as a matrix with respect to a basis $\{\phi_j\}_{j=1}^N$. The basis is formed by taking terms from the typical Fourier basis and projecting out the components of each function in the direction of $v$ and $w_0$. Some basis functions were removed to ensure linear independence of the basis. Let $\overline{P} w_1=\sum_{j=1}^N a_j \phi_j$. Then 
\begin{align*}
B \vec{a}=\vec{b}
\end{align*}
where $B_{j,k}=\langle \phi_j,(K_{00}+1)\phi_k \rangle$ and $b_j=\langle \phi_j, g \rangle$. So we can solve for $\vec{a}$ by inverting the matrix $B$. Once we have an approximation for $\overline{P}w_1$ we can compute $\overline{P}(K_{00}+1)\overline{P}w_1$ directly to observe agreement with the function $g$. 
With an approximation for $\overline{P}w_1$ in hand we can compute inner products containing $\overline{P} w_1$ in the same way as the previous inner product containing $w_0$. In this way we establish that $\alpha_2>0$.

\section*{Acknowledgments}
The authors thank T.P. Tsai for suggesting the problem and for helpful discussions. 
MC is supported by an NSERC CGS.
SG is supported by an NSERC Discovery Grant.

\noindent colesmp@math.ubc.ca

\noindent gustaf@math.ubc.ca

\end{document}